%
%
\documentclass[twoside]{siamltex}

\textwidth 6.0in
\textheight 8.0in
\topmargin -0.2in
\evensidemargin 0.0in
\oddsidemargin 0.0in

\newtheorem{example}{Example}[section]
\newcommand{\sgap}{\vspace{0.05in}}
\newcommand{\gap}{\vspace{0.1in}}

\newcommand{\rnplus}{R_+^n}

\newcommand{\Rn}{R^n}

\newcommand{\A}{{\cal A}}
\newcommand{\I}{{\cal I}}
\newcommand{\B}{{\cal B}}
\newcommand{\D}{{\cal D}}
\newcommand{\C}{{\cal C}}

\pagestyle{myheadings}
\thispagestyle{plain}

\markboth{\sc M. Seetharama Gowda }{\sc Z-tensors}

\title{{\bf
{\cal Z}-tensors and complementarity problems}\thanks{This research was supported by the National Natural Science Foundation of China (11301022,11431002), and the Hong Kong Research Grant Council (Grant No.
PolyU 502111, 501212, 501913 and 15302114).}}

\author{M. Seetharama Gowda\footnotemark[2], \hspace{1mm} Ziyan Luo\footnotemark[3], \hspace{1mm}  Liqun Qi\footnotemark[4], \hspace{1mm} and Naihua Xiu\footnotemark[5]
}
\begin{document}

\maketitle

\gap

\begin{center}
{\bf October 24, 2015, Revised December 28, 2016}
\end{center}

\gap

\renewcommand{\thefootnote}{\fnsymbol{footnote}}

\footnotetext[2]{Department of Mathematics and Statistics, University of Maryland, Baltimore County, Baltimore, MD 21250. Email: gowda@umbc.edu}

\footnotetext[3]{The State Key Laboratory of Rail Traffic Control and Safety, Beijing Jiaotong University, Beijing 100044, P.R. China.  E-mail: starkeynature@hotmail.com}

\footnotetext[4]{Department of Applied Mathematics, The Hong Kong Polytechnic University, Hung Hom, Kowloon, Hong Kong. E-mail: liqun.qi@polyu.edu.hk, http://www.polyu.edu.hk/ama/people/detail/1/}

\footnotetext[5]{Department of Mathematics, School of Science, Beijing Jiaotong University, Beijing, P.R.China.  E-mail: nhxiu@bjtu.edu.cn}

\begin{abstract}
Tensors are multidimensional analogs of matrices. In this paper, based on degree-theoretic ideas, we 
study  homogeneous nonlinear complementarity problems induced by tensors. By  specializing this to 
$Z$-tensors (which are tensors with non-positive off-diagonal entries), we 
describe various equivalent
conditions for a $Z$-tensor to have the global solvability property. 
We show 
by an example that the global solvability need not imply  unique solvability and provide a  sufficient and easily checkable  condition for unique solvability. 
\end{abstract}

\noindent{\bf Key Words.}
tensor, $Z$-tensor, strong $M$-tensor, complementarity problem, degree theory\\
\noindent{\bf Mathematics Subject Classification.} 15A18, 15B48, 90C33

\section{Introduction}
A tensor is simply a multidimensional analog of a matrix. Given natural numbers $m\,(\geq 2)$ and $n$, an $m$th order, $n$-dimensional tensor is of the form
\begin{equation}\label{tensor a}
\A=[a_{i_1\,i_2\,i_3\,\cdots\,i_m}]\,
\end{equation}
where $a_{i_1\,i_2\,i_3\,\cdots\,i_m}\in R,\,\, i_1,i_2,i_3,\ldots,i_m\in \{1,~\ldots,~n\}.$
During the last decade, tensors have become very important in various areas. Numerous articles extending basic
concepts and results of matrix theory have been written, see for example, \cite{chang-pearson-zhang, ding-qi-wei, su-huang-ling-qi, kannan et al, qi, yang-yang-II, yang-yang-I, zhang-qi-zhou, zhou-qi-wu}.
With a view towards bringing in optimization ideas, researchers
have introduced various  complementarity concepts \cite{bai-huang-wang, che-qi-wei, DLQ2015, huang-qi, luo-qi-xiu, Song-Qi-2015, SQ2015, song-qi, SY2015,wang-huang-bai}.  Given a tensor $\A$ in the form (\ref{tensor a}), we define a function $F:\Rn\rightarrow \Rn$ whose $i$th component is given by
$$F_i(x):=\sum_{i_2,i_3,\ldots,\,i_m=1}^{n} a_{i\,i_2\,i_3\cdots\,i_m}x_{i_2}x_{i_3}\cdots\,x_{i_m}.$$
This function,  abbreviated by
$$F(x)=\A x^{m-1},$$
has a homogeneous polynomial of degree $m-1$ in each component.
Corresponding to this $F$ and any $q\in \Rn$, we consider the {\it tensor complementarity problem} TCP$(\A,q)$: Find $x\in \Rn$ such that
$$x\geq 0,\,F(x)+q\geq 0\,\,\mbox{and}\,\,\langle x, F(x)+q\rangle =0,$$
where $x\geq 0$ means that each component of $x$ is nonnegative, etc.
This is a generalization of the {\it linear complementarity problem} (corresponding to $m=2)$, a
special instance of a {\it nonlinear complementarity problem} and a particular case of a {\it variational inequality problem} corresponding to the closed convex cone $\rnplus$. Complementarity problems and variational inequality problems have been extensively studied and there is a vast literature dealing with
existence, uniqueness, computation, and applications, see for example, \cite{facchinei-pang-I, facchinei-pang-II}. In the last decade or so, much work has been done in extending these to symmetric cones. Since the tensor complementarity problem is a special case of a nonlinear complementarity problem, the entire theory of nonlinear complementarity problems is applicable to tensor complementarity problems. However, because each component of $F(x)$ is a homogeneous polynomial (of the same degree), we may expect some
specialized results;  see \cite{gowda-pang} for an early reference where (multi)functions with certain `homogeneity' are treated. 
In a recent article \cite{huang-qi}, Huang and Qi formulate 
an  $n$-person noncooperative game (where the utility
function of every player is a homogeneous polynomial of degree) as a tensor complementarity problem. 

\gap

In the first part of the article, we discuss the global solvability property in tensor complementarity problems. We say that a tensor $\A$ is a {\bf Q}-tensor \cite{SQ2015} if TCP$(\A,q)$ has a solution for all $q$. Using topological degree theory, we present a basic global solvability result (Theorem  \ref{nonzero degree impies Q}).
In the second part, we study $Z$-tensors, which are tensors with non-positive `off-diagonal' entries. It is easy to see that such a tensor can be written as
$$\A=r\I-\B,$$
where $r\in R$, $\I$ is the identity tensor and $\B$ is a nonnegative tensor (that is, all its entries are nonnegative).
Properties of nonnegative tensors, particularly in relation to the Perron-Frobenius theorem, have been explored in several recent papers, see
\cite{chang-pearson-zhang, yang-yang-I, yang-yang-II, zhou-qi-wu}.
If $\rho(B)$ denotes the spectral radius of $\B$, one says that the $Z$-tensor
$\A=r\I-\B$ is an $M$-tensor if $r\geq \rho(B)$ and strong (or nonsingular) $M$-tensor if  $r> \rho(B)$.
Some properties of $M$-tensors and strong $M$-tensors have been discussed in
\cite{ding-qi-wei, zhang-qi-zhou, kannan et al}. Motivated by a paper of Luo et al. \cite{luo-qi-xiu}, here, we undertake a study of complementarity properties of $Z$-tensors, specifically asking when a $Z$-tensor $\A$ has the $Q$-property.
 In addition to proving several equivalent
properties, we show how degree theory offers a way of understanding the solvability of certain equations arising in $Z$-tensor complementarity problems.\\

This paper is organized as follows. In Section 2, we recall some results about nonnegative tensors. Section 3 covers a basic result about $Q$-tensors via degree theory. In Section 4, we consider $Z$-tensors and characterize the strong $M$-tensor property in various equivalent ways. Finally, in Section 5, we describe some refined properties of $Z$-tensors such as the surjectivity of the map $F$, 
 $P$ and globally uniquely solvable properties.

\section{Preliminaries} Throughout this paper,
$\Rn$ denotes the $n$-dimensional Euclidean space with the usual inner product. The usual  and the infinity norms are respectively denoted by $||\cdot||$ and $||\cdot||_\infty$. For $x\in \Rn$, we write $x\geq 0$ $(x>0)$ if all components of $x$ are nonnegative (respectively, positive). The nonnegative orthant of $\Rn$ is denoted by $\rnplus.$ We denote the complex $n$-space by $C^n$. \\
Let $\A=[a_{i_1\,i_2\,i_3\,\cdots\,i_m}]$ denote an $m$th order, $n$-dimensional tensor. The entries $a_{i\,i\,\cdots\,i}$, $1\leq i\leq n$, are the `diagonal' entries; the rest are `off-diagonal' entries of $\A$. The identity tensor is one with all diagonal entries one and off-diagonal entries zero.
A tensor is said to be {\it nonnegative} if all its entries are nonnegative.

\gap

A complex number $\lambda$ is said to be an {\it eigenvalue} of $\A$ if there exists a nonzero vector $x\in C^n$ such that
$$\A x^{m-1}=\lambda\,x^{[m-1]},$$
where $x^{[m-1]}$ is the vector in $C^n$ with $i$th component $x_i^{m-1},$
see \cite{chang-pearson-zhang, qi}. Define the spectrum $\sigma(\A)$ to be the set of all eigenvalues of $\A$. Then, the spectral radius of $\A$ is defined by
$$\rho(\A):=\max\{ |\lambda|:\lambda\in \sigma(\A)\}.$$

\gap

The following is a  Perron-Frobenius type theorem for nonnegative tensors.

\sgap

\begin{proposition} (\cite[Theorem 2.3]{yang-yang-I}) If $\B$ is a nonnegative tensor, then $\rho(\B)$ is an eigenvalue of $\B$ with a nonnegative eigenvector.
\end{proposition}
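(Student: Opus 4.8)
The plan is to follow the classical route: perturb $\B$ to a strictly positive tensor, settle the positive case by a Brouwer fixed‑point argument on the unit simplex, and then recover the statement for $\B$ by a compactness and limiting argument.

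First I would set $\Delta=\{x\in\rnplus:\sum_i x_i=1\}$ and, for $\ve>0$, let $\B_\ve=\B+\ve\,\mathcal{E}$, where $\mathcal{E}$ is the tensor all of whose entries equal $1$, so that every entry of $\B_\ve$ is positive. For $x\in\Delta$ one checks that every component of $\B_\ve x^{m-1}$ is bounded below by a positive constant (the monomial $x_j^{m-1}$ with $x_j=\max_k x_k\ge 1/n$ already contributes at least $\ve(1/n)^{m-1}$, and all other monomials are nonnegative), so the map
$$G_\ve(x)=\frac{\big(\B_\ve x^{m-1}\big)^{[1/(m-1)]}}{\big\|\,\big(\B_\ve x^{m-1}\big)^{[1/(m-1)]}\,\big\|_1}$$
is well defined and continuous from $\Delta$ into $\Delta$, where $z^{[1/(m-1)]}$ denotes the componentwise positive $(m-1)$st root. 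Brouwer's theorem yields a fixed point $x_\ve\in\Delta$; rewriting $G_\ve(x_\ve)=x_\ve$ and raising to the power $m-1$ gives $\B_\ve x_\ve^{m-1}=\lambda_\ve\,x_\ve^{[m-1]}$ with $\lambda_\ve>0$, and since the left‑hand side is strictly positive and $\lambda_\ve>0$ we must have $x_\ve>0$. Thus $\lambda_\ve$ is a positive eigenvalue of $\B_\ve$ with a positive eigenvector.

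Next I would show that $\lambda_\ve=\rho(\B_\ve)$. If $\B_\ve y^{m-1}=\mu\,y^{[m-1]}$ with $y\ne 0$, then taking absolute values componentwise and using $\B_\ve\ge 0$ gives $|\mu|\,|y_i|^{m-1}\le\big(\B_\ve|y|^{m-1}\big)_i$ for all $i$, where $|y|=(|y_1|,\dots,|y_n|)$. After scaling $y$ so that $|y|\le x_\ve$ componentwise with equality in some coordinate $j$, monotonicity of $u\mapsto\B_\ve u^{m-1}$ on the nonnegative orthant yields $\big(\B_\ve|y|^{m-1}\big)_j\le\big(\B_\ve x_\ve^{m-1}\big)_j=\lambda_\ve(x_\ve)_j^{m-1}=\lambda_\ve|y_j|^{m-1}$; dividing by $|y_j|^{m-1}>0$ gives $|\mu|\le\lambda_\ve$, so $\rho(\B_\ve)=\lambda_\ve$.

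Finally I would let $\ve\downarrow 0$. The vectors $x_\ve$ lie in the compact set $\Delta$, and $\lambda_\ve=\big\|(\B_\ve x_\ve^{m-1})^{[1/(m-1)]}\big\|_1$ stays bounded for $\ve\le 1$, so along a subsequence $\ve_k\downarrow 0$ we have $x_{\ve_k}\to x\in\Delta$ and $\lambda_{\ve_k}\to\lambda\ge 0$. Since $\B_{\ve_k}\to\B$ entrywise, passing to the limit in $\B_{\ve_k}x_{\ve_k}^{m-1}=\lambda_{\ve_k}x_{\ve_k}^{[m-1]}$ gives $\B x^{m-1}=\lambda x^{[m-1]}$ with $x\ge 0$ and $x\ne 0$, so $\lambda\in\sigma(\B)$ with nonnegative eigenvector $x$, and in particular $\lambda\le\rho(\B)$. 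For the reverse inequality, given any $\mu\in\sigma(\B)$ with eigenvector $y$, the inequality $|\mu|\,|y_i|^{m-1}\le(\B|y|^{m-1})_i\le(\B_{\ve_k}|y|^{m-1})_i$ together with the same scaling argument against $x_{\ve_k}>0$ gives $|\mu|\le\lambda_{\ve_k}$ for every $k$, hence $|\mu|\le\lambda$; therefore $\rho(\B)\le\lambda$ and $\lambda=\rho(\B)$. The step requiring the most care is the construction of $G_\ve$: the componentwise fractional power is precisely why one must first pass to the strictly positive $\B_\ve$, so that all quantities stay in the interior of the orthant and $G_\ve$ is genuinely a continuous self‑map of $\Delta$.
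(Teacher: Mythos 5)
Your argument is correct, but note that the paper itself offers no proof of this proposition: it is imported verbatim by citation from Yang and Yang, so the only meaningful comparison is with the strategy of that cited source, which your proposal essentially reproduces. The route you take --- perturb $\B$ to the entrywise positive tensor $\B_\ve=\B+\ve\,\mathcal{E}$, obtain a positive eigenpair of $\B_\ve$ via Brouwer's fixed point theorem on the simplex, identify that eigenvalue with $\rho(\B_\ve)$ by the scaling/comparison (Collatz--Wielandt type) argument, and then pass to the limit $\ve\downarrow 0$ by compactness --- is the classical Perron--Frobenius scheme for tensors and is sound at every step: the fixed-point map $G_\ve$ is well defined because $(\B_\ve x^{m-1})_i\geq \ve n^{-(m-1)}$ on the simplex, the modulus inequality $|\mu|\,|y_i|^{m-1}\leq(\B_\ve|y|^{m-1})_i$ and the monotonicity of $u\mapsto\B_\ve u^{m-1}$ on $\rnplus$ justify $|\mu|\leq\lambda_\ve$, and the limiting eigenvector stays in $\Delta$, hence is nonzero and nonnegative. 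The only blemish is notational: from the fixed-point identity one gets $\lambda_\ve=c_\ve^{m-1}$ where $c_\ve=\bigl\|(\B_\ve x_\ve^{m-1})^{[1/(m-1)]}\bigr\|_1$, not $\lambda_\ve=c_\ve$ as written; this does not affect the boundedness of $\lambda_\ve$ or any subsequent step. An alternative, slightly shorter finish would be to get $\rho(\B)\leq\lambda$ directly from the max--min characterization in Proposition 2.2 of the paper, but your limiting comparison argument achieves the same thing without invoking it.
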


\gap

Next, we recall a Collatz-Wielandt type result.

\sgap

\begin{proposition}(\cite[Lemma 5.3 and Theorem 5.3]{yang-yang-I})\label{yang-yang proposition} Let $\B$ be a nonzero nonnegative $m$th order, $n$-dimensional tensor. Let $\rho(\B)$ be its spectral radius. Then, for any $d>0$,
$$\min_{i} \frac{(\B d^{m-1})_i}{d_i^{m-1}}\leq \rho(\B)\leq \max_{i}  \frac{(\B d^{m-1})_i}{d_i^{m-1}}.$$
Moreover,
$$\rho(\B)=\max_{0\neq x\geq 0}\,\min_{x_i>0}\frac{(\B x^{m-1})_i}{x_i^{m-1}}.$$
\end{proposition}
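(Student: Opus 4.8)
I would deduce both assertions from the Perron--Frobenius type theorem above (that $\rho(\B)$ is an eigenvalue of $\B$ with a nonnegative eigenvector), using only two elementary features of the map $x\mapsto\B x^{m-1}$ on $\rnplus$, both immediate from $\B\geq 0$: it is order preserving ($0\leq x\leq y$ implies $\B x^{m-1}\leq\B y^{m-1}$ componentwise) and positively homogeneous of degree $m-1$ ($\B(tx)^{m-1}=t^{m-1}\B x^{m-1}$ for $t\geq 0$), so that a positive rescaling of a nonzero nonnegative eigenvector is again an eigenvector for the same eigenvalue. For the \emph{upper bound}, put $\beta:=\max_i(\B d^{m-1})_i/d_i^{m-1}$ with $d>0$ fixed; we must show $\rho(\B)\leq\beta$. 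I would take a nonzero nonnegative eigenvector $u$ with $\B u^{m-1}=\rho(\B)u^{[m-1]}$, rescale it by $\min_{u_i>0}d_i/u_i$ so that $u\leq d$ and $u_k=d_k$ for some $k$ with $u_k>0$, and read off, using order preservation,
\[
\rho(\B)\,d_k^{m-1}=\rho(\B)\,u_k^{m-1}=(\B u^{m-1})_k\leq(\B d^{m-1})_k\leq\beta\,d_k^{m-1}.
\]

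For the \emph{lower bound}, put $\alpha:=\min_i(\B d^{m-1})_i/d_i^{m-1}$; we must show $\alpha\leq\rho(\B)$. Here the scaling above cannot be used directly, since the Perron eigenvector may vanish in some coordinate while $d>0$. I would get around this by perturbing: let $E$ be the $m$th order, $n$-dimensional tensor all of whose entries equal $1$ and put $\B_\ve:=\B+\ve E$ for $\ve>0$. Then $\B_\ve$ is a positive tensor, so any nonzero nonnegative Perron eigenvector $u_\ve$ of $\B_\ve$ is in fact strictly positive: each component of $\B_\ve u_\ve^{m-1}$ is a sum of monomials with all coefficients $\geq\ve>0$ and at least one strictly positive term, hence is positive, which forces $\rho(\B_\ve)>0$ and then $u_\ve>0$. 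Rescaling $u_\ve$ by $\max_i d_i/u_{\ve,i}$ so that $d\leq u_\ve$ and $d_k=u_{\ve,k}$ for some $k$, order preservation gives
\[
\alpha\,d_k^{m-1}\leq(\B d^{m-1})_k\leq(\B_\ve d^{m-1})_k\leq(\B_\ve u_\ve^{m-1})_k=\rho(\B_\ve)\,u_{\ve,k}^{m-1}=\rho(\B_\ve)\,d_k^{m-1},
\]
so $\alpha\leq\rho(\B_\ve)$ for every $\ve>0$; letting $\ve\downarrow 0$ and using the continuity of the spectral radius $\rho(\cdot)$ (a classical fact) gives $\alpha\leq\rho(\B)$.

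For the \emph{max--min identity}, the supremum is attained and is at least $\rho(\B)$, because for the Perron eigenvector $u$ one has $(\B u^{m-1})_i/u_i^{m-1}=\rho(\B)$ whenever $u_i>0$. For the reverse inequality I would fix $0\neq x\geq 0$, set $J:=\{i:x_i>0\}$ and $x_J:=(x_i)_{i\in J}>0$, and pass to the principal subtensor $\B_J$ --- the tensor with entries $a_{i_1\cdots i_m}$, $i_1,\dots,i_m\in J$. Since every monomial in $(\B x^{m-1})_i$ involving an index outside $J$ vanishes, $(\B x^{m-1})_i=(\B_J x_J^{m-1})_i$ for all $i\in J$, whence
\[
\min_{x_i>0}\frac{(\B x^{m-1})_i}{x_i^{m-1}}=\min_{i\in J}\frac{(\B_J x_J^{m-1})_i}{(x_J)_i^{m-1}}\leq\rho(\B_J)
\]
by the lower bound applied to $\B_J$ and the strictly positive vector $x_J$. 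It then remains to see $\rho(\B_J)\leq\rho(\B)$, which I would obtain from the perturbation once more: for the positive eigenvector $u_\ve$ of $\B_\ve=\B+\ve E$, the same vanishing of off-$J$ monomials gives $(\B_J(u_\ve)_J^{m-1})_i\leq(\B_\ve u_\ve^{m-1})_i=\rho(\B_\ve)\,u_{\ve,i}^{m-1}$ for $i\in J$, so the upper bound applied to $\B_J$ and $(u_\ve)_J>0$ yields $\rho(\B_J)\leq\rho(\B_\ve)$, and $\ve\downarrow 0$ finishes.

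The step I expect to be the real obstacle is precisely this recurring need for a \emph{strictly positive} reference vector in the scaling comparisons --- both for the lower bound and for $\rho(\B_J)\leq\rho(\B)$ --- whereas the Perron--Frobenius theorem only supplies a nonnegative one. Perturbing $\B$ to the positive tensor $\B+\ve E$, whose Perron eigenvector is automatically strictly positive, and then letting $\ve\downarrow 0$ is the device that overcomes this; the continuity of $\rho(\cdot)$ needed for that limit is the only ingredient used here that is not contained in the excerpt.
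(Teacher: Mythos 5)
The paper itself offers no argument for this proposition: it is imported verbatim from Yang--Yang (their Lemma 5.3 and Theorem 5.3), so there is no in-paper proof to match yours against. Judged on its own, your proof is correct, and it is a clean, self-contained Collatz--Wielandt derivation from Proposition 2.1: the scaling comparisons for the upper bound, the perturbation $\B+\ve E$ to manufacture a strictly positive Perron eigenvector for the lower bound, and the reduction to the principal subtensor $\B_J$ for the max--min identity are all sound. Two remarks. First, you rightly do not invoke the paper's Corollary 2.3 ($\rho$ of a principal subtensor is at most $\rho(\B)$) -- that corollary is deduced \emph{from} this proposition, so using it would be circular -- and instead you reprove it via the perturbation; the inequality $(\B_J(u_\ve)_J^{m-1})_i\leq(\B_\ve u_\ve^{m-1})_i$ holds because the left side is a partial sum with smaller coefficients (the off-$J$ monomials do not vanish here, since $u_\ve>0$, they are simply dropped), a small wording slip that does not affect validity. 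Second, the one ingredient you import, continuity of $\rho(\cdot)$ in the entries, is genuinely true for tensors but is not as ``classical'' as in the matrix case: it rests on the fact that the eigenvalues are the roots of the characteristic polynomial $\det(\lambda\I-\B)$, a monic polynomial of degree $n(m-1)^{n-1}$ whose coefficients are polynomials in the entries (the determinant theory of Hu--Huang--Ling--Qi, reference [13] of the paper), so it deserves a citation rather than a bare assertion. Alternatively, you can avoid it entirely: normalize the eigenvectors $u_\ve$, note $\rho(\B_\ve)$ is bounded (apply your upper bound to $\B_\ve$ with $d$ the all-ones vector), extract a subsequence with $u_{\ve_k}\to u\neq 0$ and $\rho(\B_{\ve_k})\to\mu$, and pass to the limit in the eigenvalue equation to see that $\mu$ is an eigenvalue of $\B$, hence $\mu\leq\rho(\B)$; this closes both limiting steps using nothing beyond the definition of an eigenvalue.
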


\gap

Given an $m$th order, $n$-dimensional tensor $\A$, let $I\subseteq \{1,2,\ldots, n\}$. Then, the principal subtensor of $\A$ corresponding to $I$ is given by
$\widetilde{\A}:=[a_{i_1\,i_2\,i_3\,\cdots\,i_m}],$
where $i_k\in I$ for all $k=1,2\ldots,m.$

\gap

The following corollary is immediate from the above proposition.

\sgap

\begin{corollary} \label{rho of a subtensor}
Let $\D$ be a principal subtensor of a nonnegative tensor $\B$. Then $\rho(\D)\leq \rho(\B).$
\end{corollary}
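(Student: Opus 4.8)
The plan is to use the Collatz--Wielandt characterization of the spectral radius from Proposition~\ref{yang-yang proposition}, applied to the subtensor $\D$. Let $I\subseteq\{1,2,\ldots,n\}$ be the index set defining $\D$ as the principal subtensor of $\B$ corresponding to $I$. If $\D$ is the zero tensor, then $\rho(\D)=0\leq\rho(\B)$ trivially (the spectral radius of a nonnegative tensor is nonnegative, e.g.\ by Proposition~2.1), so I may assume $\D\neq 0$.

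First I would invoke the last equality of Proposition~\ref{yang-yang proposition} for $\D$: since $\D$ is a nonzero nonnegative tensor of order $m$ and dimension $|I|$, there exists $0\neq u\geq 0$ in $R^{|I|}$ with
$$\rho(\D)=\min_{u_i>0}\frac{(\D u^{m-1})_i}{u_i^{m-1}}.$$
Next I would lift $u$ to a vector $x\in\rnplus$ by setting $x_i=u_i$ for $i\in I$ and $x_i=0$ for $i\notin I$; then $x\neq 0$ and $x\geq 0$. The key observation is that for each index $i\in I$ with $x_i>0$, the $i$th component $(\B x^{m-1})_i=\sum a_{i\,i_2\cdots i_m}x_{i_2}\cdots x_{i_m}$ only picks up terms in which all of $i_2,\ldots,i_m$ lie in $I$ (the others vanish because the corresponding $x$-coordinates are zero), so $(\B x^{m-1})_i=(\D u^{m-1})_i$ for those $i$. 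Hence
$$\min_{x_i>0}\frac{(\B x^{m-1})_i}{x_i^{m-1}}=\min_{u_i>0}\frac{(\D u^{m-1})_i}{u_i^{m-1}}=\rho(\D).$$
Applying the Collatz--Wielandt upper bound $\rho(\B)=\max_{0\neq x\geq 0}\min_{x_i>0}(\B x^{m-1})_i/x_i^{m-1}$ from Proposition~\ref{yang-yang proposition} to this particular $x$ then gives $\rho(\D)\leq\rho(\B)$.

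The only delicate point is the bookkeeping in the middle step: one must check that zeroing out the coordinates outside $I$ really does kill exactly the off-$I$ monomials in each active component $F_i$ with $i\in I$, so that the restricted homogeneous map agrees with $\D u^{m-1}$ coordinatewise on the support of $x$. This is a routine consequence of the definition of a principal subtensor, but it is the step that makes the inequality work, so I would state it explicitly. Everything else is a direct appeal to the quoted proposition.
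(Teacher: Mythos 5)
Your argument is correct and is exactly the route the paper intends: the paper gives no written proof beyond declaring the corollary ``immediate'' from Proposition~\ref{yang-yang proposition}, and your zero-padding of the Collatz--Wielandt maximizer for $\D$ (with the observation that the off-$I$ monomials vanish, so $(\B x^{m-1})_i=(\D u^{m-1})_i$ on the support) is the standard way to make that immediacy explicit. The handling of the trivial case $\D=0$ and the appeal to attainment of the max, as stated in the quoted proposition, are both fine.
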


\gap

Let $\A$ be a $Z$-tensor written in the form $\A=r\I-\B$, where $r\in R$ and $\B$ is a nonnegative tensor. We say that $\A$ is an $M$-tensor if $r\geq \rho(\B)$ and a {\it strong $M$-tensor} if $r>\rho(\B)$.
The following result and its proof are modified versions of Theorem 3.3 in \cite{zhang-qi-zhou}.

\gap

\begin{proposition} \label{positive stable proposition}
Let $\A=r\I-\B$ be a $Z$-tensor and $\mu(\A):= \min_{\lambda\in \sigma(\A)}\, Re(\lambda).$ Then,
$$\mu(\A)=r-\rho(\B).$$
Moreover, $\mu(\A)$ is a real eigenvalue of $\A$ corresponding to a real eigenvector.
\end{proposition}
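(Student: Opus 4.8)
The plan is to relate the eigenvalues of $\A = r\I - \B$ directly to those of the nonnegative tensor $\B$. The key algebraic observation is that $\lambda \in \sigma(\A)$ if and only if $r - \lambda \in \sigma(\B)$: indeed, $\A x^{m-1} = \lambda x^{[m-1]}$ is equivalent to $(r\I - \B)x^{m-1} = \lambda x^{[m-1]}$, i.e. $r x^{[m-1]} - \B x^{m-1} = \lambda x^{[m-1]}$, which rearranges to $\B x^{m-1} = (r-\lambda) x^{[m-1]}$. So the map $\lambda \mapsto r - \lambda$ is a bijection between $\sigma(\A)$ and $\sigma(\B)$, carrying real eigenpairs to real eigenpairs. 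Consequently $Re(\lambda) = r - Re(r-\lambda)$, and minimizing $Re(\lambda)$ over $\sigma(\A)$ amounts to maximizing $Re(\nu)$ over $\nu \in \sigma(\B)$, so $\mu(\A) = r - \max_{\nu \in \sigma(\B)} Re(\nu)$.

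The second step is to show $\max_{\nu \in \sigma(\B)} Re(\nu) = \rho(\B)$. The inequality $\max_{\nu} Re(\nu) \le \rho(\B)$ is trivial since $Re(\nu) \le |\nu| \le \rho(\B)$ for every $\nu \in \sigma(\B)$. For the reverse inequality I invoke the Perron--Frobenius type result (Proposition stated above, \cite[Theorem 2.3]{yang-yang-I}): $\rho(\B)$ is itself an eigenvalue of $\B$, and being nonnegative real it contributes $Re(\rho(\B)) = \rho(\B)$ to the maximum. Hence $\max_{\nu \in \sigma(\B)} Re(\nu) = \rho(\B)$, and combining with Step 1 gives $\mu(\A) = r - \rho(\B)$.

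Finally, for the ``moreover'' claim: by Proposition 2.1 there is a nonzero (in fact nonnegative) vector $x$ with $\B x^{m-1} = \rho(\B)\, x^{[m-1]}$, and running the equivalence of Step 1 backwards, this same $x$ satisfies $\A x^{m-1} = (r - \rho(\B))\, x^{[m-1]} = \mu(\A)\, x^{[m-1]}$. Thus $\mu(\A)$ is a real eigenvalue of $\A$ with a real (indeed nonnegative) eigenvector.

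I do not anticipate a genuine obstacle here; the argument is essentially a translation of the Perron--Frobenius theorem for $\B$ through an affine shift. The only point requiring a little care is the eigenvalue correspondence: one must check that $x \ne 0$ is preserved under the bijection (it is, since the defining equations involve the same $x$) and that the shift $\lambda \mapsto r - \lambda$ genuinely maps $\sigma(\A)$ onto $\sigma(\B)$ and not merely into it — but this is immediate from the equivalence of the two defining equations, which is reversible. One should also note that $\B$ could be the zero tensor (when $\A = r\I$), in which case $\sigma(\B) = \{0\}$, $\rho(\B) = 0$, and the conclusion $\mu(\A) = r$ still holds trivially, so no nonzero-tensor hypothesis is needed.
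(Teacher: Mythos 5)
Your proposal is correct and follows essentially the same route as the paper: the eigenvalue shift correspondence $\lambda\in\sigma(\A)\Leftrightarrow r-\lambda\in\sigma(\B)$, the bound $Re(\nu)\leq|\nu|\leq\rho(\B)$ for the lower estimate, and the Perron--Frobenius theorem to exhibit $r-\rho(\B)$ as a real eigenvalue with a real (nonnegative) eigenvector. The only cosmetic difference is that you package the argument as $\mu(\A)=r-\max_{\nu\in\sigma(\B)}Re(\nu)$ before identifying that maximum with $\rho(\B)$, whereas the paper proves the two inequalities for $\mu(\A)$ directly.
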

\begin{proof} Since $\rho(\B)$ is a real eigenvalue of $\B$ corresponding to a real eigenvector, $r-\rho(\B)$ is a real eigenvalue of $\A$ corresponding to (the same) real eigenvector. Hence,
$$\mu(\A)\leq r-\rho(\B).$$
On the other hand, if $\lambda\in \sigma(\A)$, then $r-\lambda\in \sigma(\B)$ and so,
$$r-Re(\lambda)\leq |r-\lambda|\leq \rho(\B).$$
This yields $r-\rho(\B)\leq Re(\lambda)$ and (taking the minimum over all $\lambda\in \sigma(\A)$), $r-\rho(\B)\leq \mu(\A).$
\end{proof}
\section{$Q$-tensors}
Generalizing the concept of a $Q$-matrix of linear complementarity theory \cite{cottle-pang-stone},
$Q$-tensors were introduced  in \cite{SQ2015}. Let $\A$ be an $m$th order, $n$-dimensional tensor and
$F(x):=\A x^{m-1}.$
{\it We say that $\A$ is a $Q$-tensor if for every $q\in \Rn$, TCP$(\A,q)$ has a solution.} Using the notation $\min\{x,y\}$ for the componentwise minimum of two vectors $x$ and $y$ (that is, $(\min\{x,y\})_i:=\min\{x_i,y_i\})$, we note 
that $x$ is a solution of TCP$(\A,q)$ if and only if $x$ is a solution of the piecewise polynomial equation
$$\min\{x,F(x)+q\}=0.$$
 Moreover, when $m$ is even, the same $x$ is also a solution of
$$\min\{x^{[m-1]},F(x)+q\}=0.$$
(Note that $\min\{x^{[m-1]},F(x)\}$ is homogeneous of degree $m-1$, while
$\min\{x,F(x)\}$ may not be homogeneous.)

\gap

In what follows, we employ degree theoretic ideas. All necessary ideas and results concerning degree theory are given in \cite{facchinei-pang-I}, Prop. 2.1.3.
Here is a short review. Suppose $\Omega$ is a bounded open set in $\Rn$, $g:\overline{\Omega}\rightarrow \Rn$ is continuous and $p\not\in g(\partial\,\Omega)$, where $\overline{\Omega}$ and $\partial\,\Omega$ denote, respectively, the closure and boundary of $\Omega$.
Then the degree of $g$ over $\Omega$ with respect to $p$ is defined; it is an integer and will be denoted  by 
$\mathrm{deg}\,(g,\Omega, p)$. When this degree is nonzero, the equation $g(x)=p$ has a solution in $\Omega$. If $h:\Rn \rightarrow \Rn$ is another continuous function such that $\sup_{x\in \overline{\Omega}}||g(x)-h(x)||_{\infty}$ is sufficiently small, then
$\mathrm{deg}\,(g,\Omega, p)=\mathrm{deg}\,(h,\Omega, p)$. This is the {\it nearness property} of the degree. 
Suppose $g(x)=p$ has a unique solution, say, $x^*$ in $\Omega$. Then, $\mathrm{deg}\,(g,\Omega^\prime,p)$ is  constant over all bounded open sets $\Omega^\prime$ containing $x^*$ and contained in $\Omega$. This common degree is called the
{\it local  degree} of $g$ at $x^*$;
it will be denoted by $\mathrm{deg}\,(g,x^*)$. In particular, if $h:\Rn \rightarrow \Rn$ is a continuous function such that
$h(x)=0\Leftrightarrow x=0$, then, for any bounded open set $\Omega$ containing $0$, we have
$$\mathrm{deg}\,(h,0)=\mathrm{deg}\,(h,\Omega,0);$$ moreover, when $h$ is the identity map, $\mathrm{deg}\,(h,0)=1$.
Let $H(x,t):\Rn\times [0,1]\rightarrow \Rn$ be continuous (in which case, we say that $H$ is a homotopy) and the zero set
$\{x:\,H(x,t)=0\,\,\mbox{for some}\,\,t\in [0,1]\}$
be  bounded. Then, for any bounded open set $\Omega$ in $\Rn$ that contains this zero set, we have the
{\it homotopy invariance of degree}:
$$\mathrm{deg}\,\Big (H(\cdot,1),\Omega,0\Big )=\mathrm{deg}\,\Big (H(\cdot,0),\Omega,0\Big ).$$

The following is a basic result dealing with tensor complementarity problems.
Given a tensor $\A$, let
$$\Phi(x):=\min\{x,F(x)\}.$$

\gap

\begin{theorem}\label{nonzero degree impies Q}
Suppose that
$$\Phi(x)=0\Rightarrow x=0\quad\mbox{and}\quad \deg(\Phi,0)\neq 0.$$
Then, $\A$ is a $Q$-tensor and TCP$(\A,q)$ has a nonempty compact solution for any $q\in\Rn$.
\end{theorem}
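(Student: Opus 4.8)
The plan is to fix $q\in\Rn$ and to compare, by a degree-theoretic homotopy, the map $\Phi_q(x):=\min\{x,F(x)+q\}$ --- whose zeros are precisely the solutions of TCP$(\A,q)$ --- with the map $\Phi=\Phi_0$, about which the hypotheses give information. First I would introduce $H:\Rn\times[0,1]\to\Rn$ by $H(x,t):=\min\{x,F(x)+tq\}$, which is continuous and satisfies $H(\cdot,0)=\Phi$ and $H(\cdot,1)=\Phi_q$.

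The crux of the argument --- and the step I expect to be the main obstacle --- is to show that the zero set $S:=\{x:\,H(x,t)=0\,\,\mbox{for some}\,\,t\in[0,1]\}$ is bounded, since only then can the homotopy invariance of degree be applied over a fixed ball. I would argue by contradiction: if $x_k\in S$ with $\|x_k\|\to\infty$ and $t_k\in[0,1]$ are the associated parameters, then, passing to a subsequence, $u_k:=x_k/\|x_k\|\to u$ with $\|u\|=1$. Writing $H(x_k,t_k)=0$ out as $x_k\ge 0$, $F(x_k)+t_kq\ge 0$, and $\langle x_k,F(x_k)+t_kq\rangle=0$, using the homogeneity $F(x_k)=\|x_k\|^{m-1}F(u_k)$, and dividing these three relations by $\|x_k\|$, $\|x_k\|^{m-1}$, and $\|x_k\|^{m}$ respectively, the terms carrying $q$ acquire a factor $\|x_k\|^{-(m-1)}\to 0$ (this is where $m-1\ge 1$ is used), so letting $k\to\infty$ gives $u\ge 0$, $F(u)\ge 0$, and $\langle u,F(u)\rangle=0$, that is, $\Phi(u)=\min\{u,F(u)\}=0$. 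By hypothesis this forces $u=0$, contradicting $\|u\|=1$; hence $S$ is bounded.

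With $S$ bounded, I would choose $R>0$ so large that $S\subseteq\Omega:=\{x:\|x\|<R\}$; then $H(x,t)\ne 0$ for every $x\in\partial\Omega$ and $t\in[0,1]$, and homotopy invariance yields $\deg(\Phi_q,\Omega,0)=\deg(H(\cdot,1),\Omega,0)=\deg(H(\cdot,0),\Omega,0)=\deg(\Phi,\Omega,0)$. Since $\Phi(x)=0\Leftrightarrow x=0$ and $0\in\Omega$, the last term equals the local degree $\deg(\Phi,0)$, which is nonzero by assumption; hence $\deg(\Phi_q,\Omega,0)\ne 0$ and the equation $\Phi_q(x)=0$, equivalently TCP$(\A,q)$, has a solution in $\Omega$. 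Finally, the solution set of TCP$(\A,q)$ equals $\Phi_q^{-1}(0)$, which is closed by continuity of $\Phi_q$ and is contained in the bounded set $S$, hence compact; since it is also nonempty, the proof is complete.
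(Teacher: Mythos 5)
Your proposal is correct. Each step checks out: the equivalence of $H(x,t)=0$ with the complementarity relations $x\ge 0$, $F(x)+tq\ge 0$, $\langle x,F(x)+tq\rangle=0$ is valid; the normalization argument correctly kills the $q$-terms because $t_k\in[0,1]$ is bounded and $m-1\ge 1$, so the limit $u$ satisfies $\Phi(u)=0$ with $\|u\|=1$, contradicting the hypothesis; and with the zero set of the homotopy bounded, the version of homotopy invariance reviewed in the paper applies verbatim, while the identity $\deg(\Phi,\Omega,0)=\deg(\Phi,0)$ is exactly the paper's remark about local degree for maps vanishing only at the origin. Your route differs from the paper's in the mechanism used to transfer the degree from $\Phi$ to $\Phi_q$: the paper first uses the nearness property of degree, exploiting $\|\Phi_q-\Phi\|_\infty\le\|q\|_\infty$, to solve TCP$(\A,q)$ for all $q$ near zero, and then invokes the homogeneity of $F$ to rescale and reach arbitrary $q$; compactness is established afterwards by the same kind of normalization argument you use. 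You instead fix an arbitrary $q$, connect $\Phi$ to $\Phi_q$ by the linear homotopy $\min\{x,F(x)+tq\}$, and use the normalization argument once to bound the entire homotopy zero set, which handles every $q$ in a single stroke (no scaling step) and yields compactness of the solution set as a byproduct. The paper's two-step argument, on the other hand, makes visible a stability feature it later exploits in the remarks to Corollary \ref{conditions giving Q}: nearby nonhomogeneous perturbations of $(F,0)$ remain solvable, an observation your fixed-$q$ homotopy does not directly record. Both arguments rest on the same two ingredients (degree comparison plus homogeneity of $F$), so the difference is one of packaging rather than substance.
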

\begin{proof} We fix a bounded open set $\Omega$ in $\Rn$ containing zero so that 
$\mathrm{deg}\,(\Phi,0)=\mathrm{deg}\,(\Phi,\Omega, 0).$ For any $q\in \Rn$, let $\Phi_q(x):=\min\{x, F(x)+q\}.$ Since 
$||\Phi_q(x)-\Phi(x)||_{\infty}\leq ||q||_{\infty}$, we see that $\sup_{x\in \overline{\Omega}}|||\Phi_q(x)-\Phi(x)||_{\infty}$ is small when $q$ is sufficiently close to zero. Thus, by the nearness property of degree (see Prop. 2.1.3(c), \cite{facchinei-pang-I}), for all $q$ sufficiently close to zero,
$\deg(\Phi_q,\Omega, 0)= \deg(\Phi,\Omega,0)=\deg(\Phi,0)\neq 0.$ This means that $\Phi_q(x)=0$ has a solution, that is, TCP$(\A,q)$ has a solution for all $q$ near zero. Since $F(x)$ is positive homogeneous of degree $m-1$, by scaling, TCP$(\A,q)$ has a solution for all $q\in \Rn$. Now we will show the compactness of the solution set of TCP$(\A,q)$ for any given $q\in\Rn$ under the condition $\Phi(x)=0\Rightarrow x=0$.
To see this, first observe that  the solution set of TCP$(\A,q)$ is closed as it is the same as that of $\min\{x,\,F(x)+q\}=0$. The boundedness of the solution set is seen via a `normalization argument'
as follows. Suppose, if possible, for some $q$, the solution set of  $\min\{x,\, F(x)+q\}=0$ is unbounded. Let $x^{(k)}$ be a sequence in the solution set with $||x^{(k)}||\rightarrow \infty$. Writing $\min\{x,\,F(x)+q\}=0$ in terms of complementarity conditions, we see that $\min\{\lambda\,x,\,F(\lambda\,x)+\lambda^{m-1}q)\}=\min\{\lambda\,x,\,\lambda^{m-1}(F(x)+q)\}=0$ for all $\lambda>0$. Now replacing $x$ by $x^{k}$, choosing
$\lambda:=||x^{(k)}||^{-1}$, letting $k\rightarrow \infty$, and putting (without loss of generality) $\overline{x}:=\lim \frac{x^{(k)}}{||x^{(k)}||}$, we get $\min\{\overline{x}, F(\overline{x})\}=0$. We reach a contradiction as $||\overline{x}||=1$ and at the same time $\overline{x}=0.$ Thus, the nonempty solution set of TCP$(\A,q)$ is closed and bounded, hence compact.
\end{proof}

\gap

\noindent{\bf Remarks. }
 The condition $\Phi(x)=0\Rightarrow x=0$, which is equivalent to TCP$(\A,0)$ having zero as the only solution, has been shown to be equivalent to the $R_0$-property of $\A$, see  Proposition 3.1 (i) in \cite{SQ2015}. The boundedness of the involved solution set of the tensor complementarity problem has been addressed under this $R_0$-property in Theorem 3.3, \cite{SY2015}. The $Q$-property of $\A$ is discussed in Theorem 3.2 of \cite{SQ2015} in which the $R$-property of $\A$ is required. Here, differing from the $R$-property, the $Q$-property is achieved via degree theory. 
\gap

\begin{corollary}\label{conditions giving Q}
Under each of the following conditions, $\A$ is a $Q$-tensor and the corresponding TCP$(\A,q)$ has a nonempty compact solution for any $q\in\Rn$.
\begin{enumerate}
\item [(i)] There exists a vector $d>0$ such that
for TCP$(\A,0)$ and TCP$(\A,d)$,  zero (vector) is the only solution.
\item [(ii)] $\A$ is a strictly semi-monotone (or strictly semi-positive) tensor, that is, for each nonzero $x\geq 0$, $\max_{i}\,x_i(\A x^{m-1})_i>0.$
\item [(iii)] $\A$ is a strictly copositive
 tensor, that is, for all $0\neq x\geq 0$,
$\A x^m:=\langle \A x^{m-1},x\rangle>0$.
\item [(iv)] $\A$ is a positive definite tensor, that is, for all $x\neq 0$,
$\A x^m:=\langle \A x^{m-1},x\rangle>0$.
\end{enumerate}
\end{corollary}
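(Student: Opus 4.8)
The plan is to verify that each of the four conditions (i)--(iv) implies the hypotheses of Theorem~\ref{nonzero degree impies Q}, namely that $\Phi(x)=0\Rightarrow x=0$ and $\deg(\Phi,0)\neq 0$, and then simply invoke that theorem. The overall strategy is to build a degree computation around a homotopy connecting $\Phi$ to the identity map (or some map of known nonzero degree), checking along the way that no zeros escape to the boundary.

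First I would treat (i) as the base case. Here $\Phi(x)=\min\{x,F(x)\}=0$ is exactly TCP$(\A,0)$, whose only solution is $0$ by hypothesis, so the implication $\Phi(x)=0\Rightarrow x=0$ is immediate. For the degree, I would use the homotopy $H(x,t):=\min\{x,\,F(x)+t\,d\}$ for $t\in[0,1]$, noting that $H(\cdot,0)=\Phi$ and $H(\cdot,1)=\Phi_d$. The key point is that the zero set of $H$ is bounded: a zero of $H(\cdot,t)$ is a solution of TCP$(\A,td)$, and a normalization argument identical to the one in the proof of Theorem~\ref{nonzero degree impies Q} (rescaling, passing to the limit, and using homogeneity of $F$) shows that any unbounded sequence of such zeros would produce a nonzero solution of TCP$(\A,0)$, a contradiction; here one uses that $td\ge 0$ and $F(\lambda x)=\lambda^{m-1}F(x)$ so the limiting problem is TCP$(\A,0)$. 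By homotopy invariance, $\deg(\Phi,\Omega,0)=\deg(\Phi_d,\Omega,0)$ on a large enough ball $\Omega$. Then I would run a second homotopy from $\Phi_d$ toward a map whose degree is visibly $1$: the natural choice is $G(x,s):=\min\{x,\,sF(x)+d\}$ for $s\in[0,1]$, so $G(\cdot,1)=\Phi_d$ and $G(\cdot,0)=\min\{x,d\}$, which equals $0$ only at $x=0$ (since $d>0$) and has the same local degree at $0$ as the identity, namely $1$; again one must check the zero set of this homotopy stays bounded, which follows because $d>0$ forces any solution of $\min\{x,\,sF(x)+d\}=0$ with $x\ne 0$ to have some coordinate $x_i=0$ where $sF(x)_i+d_i\ge 0$ automatically holds, and a normalization argument controls the rest. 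This gives $\deg(\Phi,0)=1\neq 0$, so Theorem~\ref{nonzero degree impies Q} applies.

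Next I would show (ii)$\Rightarrow$(i), (iii)$\Rightarrow$(ii), and (iv)$\Rightarrow$(iii), so that all the remaining cases reduce to the first. The implications (iv)$\Rightarrow$(iii) and (iii)$\Rightarrow$(ii) are one-line observations: positive definiteness restricted to $0\ne x\ge 0$ gives strict copositivity, and if $\langle\A x^{m-1},x\rangle=\sum_i x_i(\A x^{m-1})_i>0$ for $0\ne x\ge 0$ then at least one term $x_i(\A x^{m-1})_i$ is positive, which is the strict semi-monotonicity condition. For (ii)$\Rightarrow$(i), suppose $\A$ is strictly semi-monotone and pick any $d>0$ (say $d=(1,\dots,1)^T$). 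If $x\ge 0$ solves TCP$(\A,0)$, then $x_i(\A x^{m-1})_i=0$ for every $i$ by complementarity, contradicting $\max_i x_i(\A x^{m-1})_i>0$ unless $x=0$. If $x\ge 0$ solves TCP$(\A,d)$, then for each $i$ either $x_i=0$ or $(\A x^{m-1}+d)_i=0$, i.e. $(\A x^{m-1})_i=-d_i<0$; in either case $x_i(\A x^{m-1})_i\le 0$ for all $i$ (the second case gives $x_i(\A x^{m-1})_i=-x_id_i\le 0$), so $\max_i x_i(\A x^{m-1})_i\le 0$, again forcing $x=0$. Hence (i) holds with this $d$.

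The main obstacle I expect is the bookkeeping in the homotopy-boundedness arguments: one must be careful that each homotopy's zero set is bounded so that homotopy invariance legitimately applies, and this requires a normalization argument in each case that correctly tracks the homogeneity degree $m-1$ of $F$ and the sign of the added vector $td$ or $d$. A secondary subtlety is the identification $\deg(\min\{x,d\},0)=1$ for $d>0$; this can be justified either by a further linear homotopy $\min\{x,(1-u)d+ux\}$ to the identity, or directly since $\min\{x,d\}=0$ has the unique solution $x=0$ and near $0$ the map agrees with $x\mapsto x$ (as $d>0$ makes $x_i<d_i$ for small $x$), so its local degree is that of the identity. Everything else is routine.
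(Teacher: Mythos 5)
Your proposal is correct and follows essentially the paper's proof: the same homotopy $H(x,t)=\min\{x,F(x)+td\}$ under condition (i), the same reductions (iv)$\Rightarrow$(iii)$\Rightarrow$(ii)$\Rightarrow$(i), and the same appeal to Theorem \ref{nonzero degree impies Q}. The only deviation is your second homotopy $\min\{x,sF(x)+d\}$, which is unnecessary (and whose boundedness you justify only loosely near $s=0$; one should instead rescale zeros with $s>0$ to solutions of TCP$(\A,d)$): since $d>0$, the map $\min\{x,F(x)+d\}$ already coincides with the identity near the origin, so its local degree is $1$ directly, which is exactly the paper's shortcut.
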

\begin{proof} $(i)$ Note that this condition is precisely what is given in the well-known Karamardian's theorem. Our degree theory proof offers, in addition to existence, a stability result (in the sense that certain nonhomogeneous
nonlinear complementarity problems of the form NCP$(G,p)$ with $(G,p)$ close to $(F,0)$ will also have solutions). Now to show that condition $(i)$ implies the desired results, we set up a homotopy:
$$H(x,t):=\min\{x,F(x)+td\}\quad 0\leq t\leq 1.$$
Since TCP$(\A,0)$ and TCP$(\A,d)$ have zero solutions, we have
$H(x,0)=0\Rightarrow x=0$ and $H(x,1)=0\Rightarrow x=0.$ In addition, for any $t$,
$0<t<1$, $H(x,t)=0$ implies, by scaling and using the homogeneity of $F$,
$\min\{sx,F(sx)+d\}=0$ for some positive $s$. This yields $x=0$. Thus, the zero set of the entire homotopy reduces to just $\{0\}.$ Now, by the homotopy invariance of the degree,
$$\deg(\Phi,0)=\deg(H(x,0),0)=\deg(H(x,1),0).$$
As $H(x,1)=\min\{x,F(x)+d\}=x$ near zero, we see that $\deg(H(x,1),0)=1.$
Thus, $\deg(\Phi,0)=1$. Now the above theorem shows that $\A$ is a $Q$-tensor and TCP$(\A,q)$ has a nonempty compact solution for any $q\in\Rn$.\\
When condition $(ii)$ holds, for any $d>0$, TCP$(\A,0)$ and TCP$(\A,d)$ have zero solutions. Hence $\A$ is a $Q$-tensor and the corresponding TCP$(\A,q)$ has a nonempty compact solution for any $q\in\Rn$.\\
It is easy to see that $(iv)\Rightarrow (iii)\Rightarrow (ii)$. Thus, the asserted conclusions hold when condition $(iii)$ or $(iv)$ holds.
\end{proof}

\gap

\noindent{\bf Remarks. }
It is worth pointing out that (i) of Corollary \ref{conditions giving Q} is actually equivalent to the $R$-property of $\A$ as discussed in Proposition 3.1 (ii) and Theorem 3.2 in \cite{SQ2015}, and the $Q$-property under (ii) of Corollary \ref{conditions giving Q} has been discussed in Corollary 3.3 in \cite{SQ2015}. Besides, see \cite{che-qi-wei}, where conditions $(iii)$ and $(iv)$ are discussed in relation to the $Q$-property of $\A$.

\section{$Z$-tensors; Some basic results} In this section, we characterize the $Q$-property of a $Z$-tensor in various equivalent ways. We start by recalling a result that says that in the case of a complementarity problem corresponding to a
$Z$-tensor, feasibility implies solvability.

\gap

\begin{proposition} (Corollary 1, \cite{luo-qi-xiu}). Suppose $\A$ is a $Z$-tensor. If TCP$(\A,q)$ is feasible, that is, there exists $u\geq 0$ such that $\A u^{m-1}+q\geq 0$, then it is solvable.
\end{proposition}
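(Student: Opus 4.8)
The plan is to adapt the classical ``least element'' argument from $Z$-matrix complementarity theory. Write $\A=r\I-\B$ with $r\in R$ and $\B$ a nonnegative tensor, so that $F(x)=\A x^{m-1}=r\,x^{[m-1]}-\B x^{m-1}$. The structural fact I would isolate first is that $x\mapsto\B x^{m-1}$ is isotone on $\rnplus$: if $0\le x\le y$, then every monomial $x_{i_2}\cdots x_{i_m}$ is dominated by $y_{i_2}\cdots y_{i_m}$, and since all entries of $\B$ are nonnegative, $\B x^{m-1}\le\B y^{m-1}$. The key point is that the ``diagonal'' term $r\,x_i^{m-1}$ depends only on the single coordinate $x_i$, while the remaining part $-(\B x^{m-1})_i$ of $F_i$ is order-reversing on $\rnplus$; this is exactly what makes the feasible set behave like a lattice. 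Accordingly, set $S:=\{x\in\rnplus:\ F(x)+q\ge 0\}$, which is nonempty by hypothesis and closed since $F$ is continuous, and aim to show that $S$ has a least element and that this least element solves TCP$(\A,q)$.

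The first step is that $S$ is closed under the componentwise minimum. Given $x,y\in S$, put $z:=\min\{x,y\}\in\rnplus$. To check $F(z)+q\ge 0$, fix a coordinate $i$ and assume without loss of generality $z_i=x_i$, so $z_i^{m-1}=x_i^{m-1}$. Since $z\le x$, isotonicity gives $(\B z^{m-1})_i\le(\B x^{m-1})_i$, hence
$$F_i(z)+q_i \;=\; r\,x_i^{m-1}-(\B z^{m-1})_i+q_i \;\ge\; r\,x_i^{m-1}-(\B x^{m-1})_i+q_i \;=\; F_i(x)+q_i \;\ge\; 0 .$$
As $i$ was arbitrary, $z\in S$; note that no sign restriction on $r$ was used.

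Next I would extract the least element. Define $x^{*}\in\rnplus$ coordinatewise by $x^{*}_j:=\inf\{v_j:\ v\in S\}$; each infimum is finite and nonnegative because $S$ is nonempty and contained in $\rnplus$. For each $j\in\{1,\ldots,n\}$ and each $k=1,2,\ldots$, choose $v^{(j,k)}\in S$ with $v^{(j,k)}_j\le x^{*}_j+1/k$, and let $w^{(k)}:=\min_{1\le j\le n}v^{(j,k)}$, which lies in $S$ since $S$ is closed under pairwise minima. Then $x^{*}_j\le w^{(k)}_j\le x^{*}_j+1/k$ for every $j$, so $w^{(k)}\to x^{*}$, and closedness of $S$ gives $x^{*}\in S$; thus $x^{*}$ is the least element of $S$. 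Finally, suppose $x^{*}$ did not solve TCP$(\A,q)$. Since $x^{*}\in S$ gives $x^{*}\ge 0$ and $F(x^{*})+q\ge 0$, there must be an index $i$ with $x^{*}_i>0$ and $F_i(x^{*})+q_i>0$. Let $e_i$ be the $i$-th standard unit vector and set $\hat x:=x^{*}-\varepsilon e_i$ for small $\varepsilon>0$; then $\hat x\in\rnplus$. For $j\ne i$ we have $\hat x_j=x^{*}_j$ and $\hat x\le x^{*}$, so the isotonicity estimate used above gives $F_j(\hat x)+q_j\ge F_j(x^{*})+q_j\ge 0$; and for coordinate $i$, since $\hat x\to x^{*}$ as $\varepsilon\to 0$ and $F_i(x^{*})+q_i>0$, we get $F_i(\hat x)+q_i>0$ for all small $\varepsilon$. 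Hence $\hat x\in S$ with $\hat x_i<x^{*}_i$, contradicting the minimality of $x^{*}$. Therefore $x^{*}_i(F_i(x^{*})+q_i)=0$ for every $i$, so $\langle x^{*},F(x^{*})+q\rangle=0$ and $x^{*}$ solves TCP$(\A,q)$.

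The step I expect to be the crux is the meet-semilattice property of $S$ together with the feasibility of the coordinatewise infimum $x^{*}$: this is the only place where the $Z$-structure of $\A$ is genuinely used, and for a general tensor neither the closure of the feasible set under componentwise minimum nor the existence of a feasible least element can be expected.
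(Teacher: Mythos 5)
Your argument is correct, and it is worth noting that the paper itself offers no proof of this proposition: it is quoted verbatim as Corollary 1 of the cited work of Luo, Qi and Xiu, so there is no internal proof to compare against. What you have written is a self-contained proof in the Cottle--Veinott ``least element'' style, which is exactly the spirit of the cited reference: you use the decomposition $\A=r\I-\B$ with $\B\geq 0$ (which exists by taking $r$ at least as large as the maximal diagonal entry, as the paper remarks), the isotonicity of $x\mapsto \B x^{m-1}$ on $\rnplus$, the resulting meet-closedness of the feasible set $S$, the feasibility of the coordinatewise infimum $x^{*}$ via the diagonal sequence $w^{(k)}$ and closedness of $S$, and a one-coordinate perturbation $x^{*}-\varepsilon e_i$ (with $\varepsilon<x^{*}_i$, so that $\hat x\geq 0$) to force complementarity at $x^{*}$. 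Each of these steps checks out: in the meet-closedness step the identity $z_i=x_i$ together with $(\B z^{m-1})_i\leq (\B x^{m-1})_i$ is exactly where the $Z$-structure enters, and in the perturbation step the coordinates $j\neq i$ are preserved by the same isotonicity estimate while coordinate $i$ is handled by continuity. What your route buys, beyond making the paper self-contained, is slightly more than the stated proposition: it shows the feasible set of a feasible $Z$-tensor complementarity problem has a least element and that this least element is itself a solution, which is the stronger structural statement underlying the cited corollary.
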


Based on this  proposition, we can characterize $Z$-tensors having the $Q$-property.

\gap

\begin{theorem}
Suppose $\A$ is an $m$th order, $n$-dimensional tensor. Consider the following statements:
\begin{enumerate}
\item [(i)] $\A$ is a $Q$-tensor.
\item [(ii)] For every $q\in \Rn$, TCP$(\A,q)$ is feasible.
\item [(iii)] There exists $d>0$ such that $\A d^{m-1}>0$.
\end{enumerate}
Then, $(i)\Rightarrow (ii)\Leftrightarrow (iii)$. Moreover,  these statements are equivalent when $\A$ is a $Z$-tensor.
\end{theorem}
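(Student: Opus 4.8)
The plan is to prove the chain $(i)\Rightarrow(ii)$, then the equivalence $(ii)\Leftrightarrow(iii)$ for an arbitrary tensor, and finally $(ii)\Rightarrow(i)$ under the $Z$-tensor hypothesis (which is immediate from the preceding proposition). For $(i)\Rightarrow(ii)$: if $\A$ is a $Q$-tensor, then for any $q$ the problem TCP$(\A,q)$ has a solution $x$, and any solution is in particular feasible (it satisfies $x\geq 0$ and $\A x^{m-1}+q\geq 0$), so $(ii)$ holds trivially.

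For $(iii)\Rightarrow(ii)$: suppose $d>0$ satisfies $\A d^{m-1}>0$. Given any $q\in\Rn$, I want to produce $u\geq 0$ with $\A u^{m-1}+q\geq 0$. By positive homogeneity, $\A(td)^{m-1}=t^{m-1}\A d^{m-1}$, and since each component of $\A d^{m-1}$ is strictly positive, choosing $t$ large enough forces $t^{m-1}(\A d^{m-1})_i \geq -q_i$ for every $i$ simultaneously (there are only finitely many components to dominate). Then $u:=td\geq 0$ witnesses feasibility of TCP$(\A,q)$, so $(ii)$ holds.

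For $(ii)\Rightarrow(iii)$: this is the step I expect to be the main obstacle, since we must extract a single vector $d$ from a family of feasibility statements, and the feasible vectors for different $q$ need not be comparable. The idea is to apply $(ii)$ to a cleverly chosen $q$. Take $q=-\mathbf{1}$ where $\mathbf{1}$ is the all-ones vector (or more robustly, apply $(ii)$ to $q=-e_j$ for each coordinate vector $e_j$, and also exploit that feasibility for $q=-\mathbf{1}$ already gives $u\geq 0$ with $\A u^{m-1}\geq \mathbf{1}>0$). Concretely: feasibility of TCP$(\A,-\mathbf{1})$ yields $u\geq 0$ with $\A u^{m-1}\geq \mathbf{1}$, i.e.\ $\A u^{m-1}>0$ componentwise. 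The remaining gap is that this $u$ is only $\geq 0$, not $>0$, whereas $(iii)$ demands $d>0$. To close this, I would perturb: set $d=u+\ve\mathbf{1}$ for small $\ve>0$; then $d>0$, and $\A d^{m-1}$ depends continuously on $\ve$ (it is a polynomial in $\ve$), so for $\ve$ small enough $\A d^{m-1}$ remains $>0$ by continuity, since $\A u^{m-1}>0$ and strict inequalities are stable under small perturbations. This gives $(iii)$.

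Finally, for the $Z$-tensor case, $(ii)\Rightarrow(i)$ is exactly Proposition (Corollary 1, \cite{luo-qi-xiu}): feasibility of TCP$(\A,q)$ implies solvability for each $q$, hence $\A$ is a $Q$-tensor. Combined with the general implications above, all three statements are equivalent when $\A$ is a $Z$-tensor. The only delicate point is the perturbation argument in $(ii)\Rightarrow(iii)$, and one should double-check that the continuity in $\ve$ is genuinely available (it is, since $(\A(u+\ve\mathbf{1})^{m-1})_i$ is a polynomial in $\ve$ with value $(\A u^{m-1})_i>0$ at $\ve=0$); alternatively one can bypass the perturbation by applying $(ii)$ directly to a $q$ whose feasibility forces strict positivity of the witness, but the perturbation route seems cleanest.
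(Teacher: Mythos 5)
Your proposal is correct, and its skeleton coincides with the paper's: $(i)\Rightarrow(ii)$ is immediate since any solution of TCP$(\A,q)$ is a feasible point, and the $Z$-tensor equivalence comes from the quoted proposition (Corollary 1 of \cite{luo-qi-xiu}, feasibility implies solvability) applied under $(ii)$. The one place you genuinely diverge is $(ii)\Leftrightarrow(iii)$: the paper simply cites Theorem 3.2 of \cite{SY2015}, whereas you prove both directions from scratch. Your $(iii)\Rightarrow(ii)$ is the standard homogeneity/scaling argument ($u=td$ with $t$ large dominates the finitely many components of $-q$), and your $(ii)\Rightarrow(iii)$ specializes $(ii)$ to $q=-\mathbf{1}$, getting $u\geq 0$ with $\A u^{m-1}\geq \mathbf{1}$, and then perturbs to $d=u+\varepsilon\mathbf{1}>0$; since each component of $\A(u+\varepsilon\mathbf{1})^{m-1}$ is a polynomial in $\varepsilon$ with positive value at $\varepsilon=0$, strict positivity survives for small $\varepsilon$, and finiteness of the index set lets you choose one $\varepsilon$ for all components. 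This is sound and makes the theorem self-contained, which is a real gain; the paper's citation buys brevity and defers the work to the referenced result, which proves essentially the same fact.
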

\begin{proof} The implication $(i)\Rightarrow (ii)$ is obvious.\\
The equivalence of $(ii)$ and $(iii)$ is established in Theorem 3.2 in \cite{SY2015}.
When $\A$ is a $Z$-tensor, we quote the previous proposition to see that TCP$(\A,q)$ is solvable under $(ii)$.
\end{proof}

\gap

The following result characterizes the $Q$-property of a $Z$-tensor in different ways. These conditions/properties have been discussed in various articles. We collect them together here, offer a proof for completeness and for further refined results (in the next section).
Note that these results are generalizations of similar results for $Z$-matrices. They are also similar to the ones for $Z$-transformations on proper cones \cite{gowda-tao}.

\gap

\begin{theorem}\label{big theorem}
Let $\A$ be a $Z$-tensor given by $\A=r\I-\B$, where $r\in R$ and $B$ is a nonnegative tensor. Then the following statements are equivalent:
\begin{enumerate}
\item [(a)] $\A$ is a $Q$-tensor.
\item [(b)] For each $q\geq 0$, there exists $x\geq 0$ such that $\A x^{m-1}=q$.
\item [(c)] $\A$ is an $S$-tensor, that is, there exists $d>0$ such that $\A d^{m-1}>0$.
\item [(d)] $\A$ is a strong $M$-tensor, that is, $r>\rho(\B)$.
\item [(e)] $\mbox{For all}\,\,0\neq x\geq 0,\, \max_{i} \,x_i(\A x^{m-1})_i>0.$
\item [(f)] For all $q\geq 0$, zero  is the only solution of TCP$(\A,q)$.
\end{enumerate}
In addition, the above  conditions are further equivalent to
\begin{enumerate}
\item [(i)] $\A$ is positive stable, that is, $\mu(\A)>0$.
\item [(ii)] For all $\varepsilon\geq 0$, $(\A +\varepsilon\,\I)x^{m-1}=0\Rightarrow x=0.$
\item [(iii)] For any nonnegative diagonal tensor $\D$ compatible with $\A$, $\A+\D$ is a strong $M$-tensor.
\end{enumerate}
\end{theorem}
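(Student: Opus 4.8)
The plan is to fold $(a)$--$(f)$ together with $(i)$ and $(ii)$ into one web of implications and then peel off $(iii)$. As backbone I would take the cycle
$$(d)\Rightarrow(e)\Rightarrow(f)\Rightarrow(a)\Rightarrow(c)\Rightarrow(d),$$
which links $(a),(c),(d),(e),(f)$; then attach $(b)$ via $(c)\Rightarrow(b)\Rightarrow(c)$, attach $(i)$ via Proposition~\ref{positive stable proposition} (it gives $\mu(\A)=r-\rho(\B)$, so $(d)\Leftrightarrow(i)$ is immediate), and attach $(ii)$ via $(i)\Rightarrow(ii)\Rightarrow(d)$. Two links of the cycle are already on record: the previous theorem (the one relating the $Q$-, feasibility-, and $S$-tensor properties) gives $(a)\Leftrightarrow(c)$ outright for a $Z$-tensor, and $(f)\Rightarrow(a)$ is Corollary~\ref{conditions giving Q}(i) used with any fixed $d>0$, since under $(f)$ both TCP$(\A,0)$ and TCP$(\A,d)$ have only the zero solution. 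The link $(e)\Rightarrow(f)$ is elementary: if $q\geq 0$ and $x$ solves TCP$(\A,q)$, complementarity forces $x_i(\A x^{m-1})_i=-x_iq_i\leq 0$ for every $i$, so a nonzero $x$ would violate $(e)$.

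The substantive links rely on the nonnegative-tensor results of Section~2. For $(c)\Rightarrow(d)$: from $\A d^{m-1}>0$ with $d>0$ we get $(\B d^{m-1})_i<r\,d_i^{m-1}$ for all $i$, hence $\max_i(\B d^{m-1})_i/d_i^{m-1}<r$, and Proposition~\ref{yang-yang proposition} yields $\rho(\B)<r$. For $(d)\Rightarrow(e)$: suppose $0\neq x\geq 0$ has $x_i(\A x^{m-1})_i\leq 0$ for all $i$ and put $J:=\{i:x_i>0\}$; since $x$ vanishes off $J$, writing $x_J$ for the restriction of $x$ to $J$ and $\widetilde{\B}$ for the principal subtensor of $\B$ on $J$, the inequalities on $J$ read $(\widetilde{\B}\,x_J^{m-1})_i\geq r\,x_i^{m-1}$, so the Collatz--Wielandt equality in Proposition~\ref{yang-yang proposition} (applied to $x_J>0$) gives $\rho(\widetilde{\B})\geq r$, contradicting $\rho(\widetilde{\B})\leq\rho(\B)<r$ from Corollary~\ref{rho of a subtensor}; the degenerate cases $\B=0$ or $\widetilde{\B}=0$ are checked directly. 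For $(i)\Rightarrow(ii)$: writing $\A+\ve\I=(r+\ve)\I-\B$, Proposition~\ref{positive stable proposition} gives $\mu(\A+\ve\I)=\mu(\A)+\ve>0$, so $0\notin\sigma(\A+\ve\I)$ and $(\A+\ve\I)x^{m-1}=0$ forces $x=0$. For $(ii)\Rightarrow(d)$: if $r\leq\rho(\B)$, set $\ve:=\rho(\B)-r\geq 0$ and pick a nonzero nonnegative eigenvector $u$ of $\B$ for $\rho(\B)$ (Perron--Frobenius); then $(\A+\ve\I)u^{m-1}=\rho(\B)u^{[m-1]}-\B u^{m-1}=0$ with $u\neq 0$, contradicting $(ii)$.

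It remains to handle $(b)$ and $(iii)$, both by monotone iteration. For $(c)\Rightarrow(b)$, fix $q\geq 0$; since $(c)\Leftrightarrow(d)$ we have $r>\rho(\B)\geq 0$, so the map $g:\rnplus\to\rnplus$ given by $g(x)_i:=\left(\frac{(\B x^{m-1})_i+q_i}{r}\right)^{1/(m-1)}$ is well defined, continuous, nondecreasing in $x$, and has as its fixed points exactly the nonnegative solutions of $\A x^{m-1}=q$. The iterates $x^{(0)}:=0$, $x^{(k+1)}:=g(x^{(k)})$ form a nondecreasing sequence; choosing $s>0$ with $s^{m-1}\A d^{m-1}\geq q$ (possible as $\A d^{m-1}>0$) gives $g(sd)\leq sd$, so $x^{(k)}\leq sd$ for all $k$, and the sequence converges to a fixed point of $g$, i.e., to a solution of $\A x^{m-1}=q$. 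Conversely $(b)\Rightarrow(c)$: solving $\A x^{m-1}=e$ with $x\geq 0$, no coordinate of $x$ can vanish (otherwise that coordinate of $\A x^{m-1}$ would be $\leq 0$ rather than $1$), so $x>0$ witnesses $(c)$. Finally, $(iii)\Rightarrow(d)$ is the case $\D=0$, while $(e)\Rightarrow(iii)$ holds because for any nonnegative diagonal tensor $\D$ compatible with $\A$ the tensor $\A+\D$ is again a $Z$-tensor with $x_i((\A+\D)x^{m-1})_i\geq x_i(\A x^{m-1})_i$ for all $x\geq 0$, so $\A+\D$ inherits $(e)$ and hence, by the equivalences above applied to $\A+\D$, is a strong $M$-tensor.

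The step I expect to be the main obstacle is $(c)\Rightarrow(b)$: one has to produce an \emph{exact} solution of $\A x^{m-1}=q$, not just a complementarity solution, and the fixed-point scheme must be set up so that the monotonicity of $g$ and the a priori bound $x^{(k)}\leq sd$ both hold for \emph{every} $q\geq 0$. The support bookkeeping in $(d)\Rightarrow(e)$ --- passing to the subtensor $\widetilde{\B}$ on $J$ and invoking Collatz--Wielandt there --- is the other spot that needs care.
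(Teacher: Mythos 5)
Your proposal is correct, and most of its links coincide with the paper's own: $(d)\Rightarrow(e)$ via the support set and the Collatz--Wielandt bound of Proposition~\ref{yang-yang proposition} together with Corollary~\ref{rho of a subtensor}, $(e)\Rightarrow(f)$, $(f)\Rightarrow(a)$ via Corollary~\ref{conditions giving Q}(i), $(c)\Rightarrow(d)$, $(d)\Leftrightarrow(i)$ via Proposition~\ref{positive stable proposition}, and $(e)\Rightarrow(iii)$, $(iii)\Rightarrow(d)$ are exactly the arguments in the paper; your appeal to the preceding theorem for $(a)\Rightarrow(c)$ is also legitimate (the paper's remark notes this). Where you genuinely diverge is in how $(b)$ is attached and, mildly, in how $(ii)$ is handled. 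The paper proves $(a)\Rightarrow(b)$ by a short complementarity argument: for $q\geq 0$ take a solution $x$ of TCP$(\A,-q)$, set $y:=\A x^{m-1}-q$, and use the $Z$-sign structure (if $y_i>0$ then $x_i=0$, so $(\A x^{m-1})_i\leq 0$) to get $\|y\|^2+\langle q,y\rangle\leq 0$, hence $y=0$; then $(b)\Rightarrow(c)$ by perturbing a nonnegative solution of $\A x^{m-1}=q>0$ to a positive $d$. You instead prove $(c)\Rightarrow(b)$ constructively by the monotone iteration $x\mapsto g(x)$ with $g(x)_i=\bigl(((\B x^{m-1})_i+q_i)/r\bigr)^{1/(m-1)}$, trapped between $0$ and the supersolution $sd$, and $(b)\Rightarrow(c)$ by noting that a nonnegative solution of $\A x^{m-1}=e$ must be strictly positive, since $x_i=0$ would force $(\A x^{m-1})_i=-(\B x^{m-1})_i\leq 0$. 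I checked the iteration: $g$ is well defined (because $r>\rho(\B)\geq 0$), monotone on $\rnplus$, its fixed points are precisely the nonnegative solutions, $g(sd)\leq sd$ is exactly the choice $s^{m-1}\A d^{m-1}\geq q$, and the monotone bounded iterates converge to a fixed point; so the argument is sound. The paper's route is shorter and derives $(b)$ directly from the $Q$-property using only the sign structure, while yours is constructive (in the spirit of classical $M$-matrix/$M$-tensor equation theory), yields the least nonnegative solution, and avoids both the complementarity input and the continuity perturbation in $(b)\Rightarrow(c)$. For $(ii)$, the paper shows $(i)\Leftrightarrow(ii)$ using that $\mu(\A)$ is attained at a real eigenvector, whereas you go $(ii)\Rightarrow(d)$ directly with a Perron eigenvector of $\B$ and $(i)\Rightarrow(ii)$ via $\mu(\A+\ve\I)=\mu(\A)+\ve$; this is essentially the same content. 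The degenerate cases you flag ($\B=0$, the choice of the nonnegative $(m-1)$-th root) are harmless, so I see no gaps.
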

\begin{proof}
$(a)\Rightarrow (b)$: Assume $(a)$ and let $q\geq 0$. Then there exists $x\in \Rn$ such that
$$x\geq 0,\,y:=\A x^{m-1}-q\geq 0,\,\langle x,y\rangle =0.$$
Then, by the $Z$-property of $\A$, $\langle \A x^{m-1},y\rangle \leq 0$. This yields $\langle y+q,y\rangle \leq 0$ and $||y||^2+\langle q,y\rangle \leq 0$. As $q\geq 0$, we get $y=0$ showing $\A x^{m-1}=q$.\\
$(b)\Rightarrow (c)$: Taking (any) $q>0$, we get an $x\geq 0$ such that $\A x^{m-1}=q>0$. By continuity, there exists $d>0$ such that $\A d^{m-1}>0$.\\
$(c)\Rightarrow (d)$: This comes from Proposition \ref{yang-yang proposition}.\\
$(d)\Rightarrow (e)$:
Assume $(d)$ and suppose there exists a nonzero $x$ with $x\geq 0$ and $x_i(\A x^{m-1})_i\leq 0$ for all $i$.
without loss of generality, let $I=\{i:x_i\neq 0\}$ be $\{1,2,\ldots,l\}$. Then, $(\A x^{m-1})_i\leq 0$ for all $i\in I$.
Since $\A=r\I-\B$, considering a principal subtensor $\D$ of $\B$, we get
$(r\I-\D)\,y^{m-1}\leq 0,$ where $y$ is the vector formed by the $x_i$, $i\in I$. This leads to
$r\leq \frac{(\D y^{m-1})_i}{y^{m-1}_i}$ for all $i\in I$ and hence to $r\leq \rho(\D)$. As $\rho(\D)\leq \rho(\B)$, this clearly is a contradiction.
Hence we have $(d)\Rightarrow (e)$.\\
$(e)\Rightarrow  (f)$:
let $q\geq 0$ and let $x$ be a solution of
TCP$(\A,q)$. If $x$ is nonzero, then $x_i(\A x^{m-1})_i>0$ for some $i$ and
$x_i(\A x^{m-1}+q)_i>0$. Thus, $x$ cannot be complementary to $\A x^{m-1}+q$,
yielding a contradiction.
\\
$(f)\Rightarrow (a)$:
This comes from Corollary \ref{conditions giving Q}, Item (i) by taking $q=0$ and $q>0$ in $(f)$.\\
Now for the additional statements:\\
$(d)\Leftrightarrow (i)$: This comes from Proposition \ref{positive stable proposition}.\\
$(i)\Rightarrow (ii)$:
If $(ii)$ is false, then $\A$ will have a non-positive real eigenvalue, contradicting $(i)$.\\
$(ii)\Rightarrow (i)$: If $\mu(\A)\leq 0$, then $\varepsilon:=-\mu(\A)$ will satisfy $(\A+\varepsilon\,\I)x^{m-1}=0$ for some nonzero $x$.
\\
$(e)\Rightarrow (iii)$: Let $\D$ be any nonnegative diagonal tensor $\D$ (compatible with $\A$). Clearly,
$\A+\D$ is a $Z$-tensor. Suppose there is a
 nonzero nonnegative $x$, with $x_i\left [(\A+\D) x^{m-1}\right ]_i\leq 0$ for all
 $i$.
Then,  $x_i(\A x^{m-1})_i\leq 0$ for all $i$, contradicting $(e)$. Thus, $\A+\D$ satisfies a condition similar to $(e)$ and hence a strong $M$-tensor.
\\
The implication $(iii)\Rightarrow (e)$ holds by taking $\D=0$.
\end{proof}

\gap

\noindent{\bf Remarks. }
  The equivalence of $(a)$ and $(c)$ can also be seen by the previous theorem.  The equivalence of $(e)$ and $(f)$ is also given in Theorem 3.2 of \cite{song-qi}. When $\A$ is a strong $M$-tensor, combining Items $(ii)$ and $(iii)$, we get: For any nonnegative diagonal tensor $\D$,
\begin{equation} \label{strong m-tensor plus diagonal}
(\A+\D)x^{m-1}=0\Rightarrow x=0.
\end{equation} 

\gap

\section{Some refined results for $Z$-tensors}
When the $Z$-tensor $\A$ is a matrix (corresponding to $m=2$), there are more than 52 conditions equivalent to $\A$ being a strong $M$-matrix \cite{berman-plemmons}. Some generalizations of these were considered in Theorem \ref{big theorem}. In what follows, we
prove some refined results for even ordered tensors.

\gap

\noindent{\bf Surjectivity of the map $F(x):=\A x^{m-1}$.}\\
In Theorem \ref{big theorem}, Item $(b)$, we saw that when $\A$ is a strong $M$-tensor,  the equation $F(x)=q$ has a
solution for every $q\geq 0$. This raises the question whether this is true for all $q\in \Rn$. When $m=2$, $F$ is a linear map. In this case, the solvability of $F(x)=q$ for all $q\geq 0$ implies that the image of $F$ contains an open set and hence gives the  surjectivity of $F$. As $F$ is linear,  this gives the  injectivity of $F$ and consequently,  the invertibility of $F$.  Additionally, $F^{-1}(\Rn_+)\subseteq \Rn_+$. This fails when $m$ is odd: Take $m=3$, $n=2$, $\A=\I$ and consider the $F(x)=(x_1^2,x_2^2)^{\top}$. Clearly, $F(x)=q$ is solvable for all $q\geq 0$, but not for all $q\in R^2$. However, we have the following result for even ordered tensors and a related example.

\gap

\begin{theorem}\label{surjectivity theorem}
Suppose $\A$ is a $Z$-tensor of even order. Consider the following statements: 
\begin{enumerate}
\item [(a)] $\A$ is a strong $M$-tensor.
\item [(b)]  $F(x)=0\Rightarrow x=0\quad\mbox{and}\quad \deg(F,0)=1.$
\item [(c)] $F(x)$ is surjective.
\end{enumerate}
Then, $(a)\Rightarrow (b)\Rightarrow (c)$. 
 \end{theorem}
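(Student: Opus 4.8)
The plan is to establish the two implications separately, with the bulk of the work concentrated on $(a)\Rightarrow(b)$; the implication $(b)\Rightarrow(c)$ is then a one-line consequence of degree theory.

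\textbf{Step 1: $(a)\Rightarrow(b)$, the injectivity-type claim $F(x)=0\Rightarrow x=0$.} Assume $\A=r\I-\B$ is a strong $M$-tensor, so $r>\rho(\B)$, and suppose $F(x)=\A x^{m-1}=0$ for some $x\neq 0$. Here is where evenness of $m$ enters: when $m$ is even, $x^{m-1}_i$ has the same sign as $x_i$, so we may split $x=x^+-x^-$ into its nonnegative and nonpositive parts and analyze the two pieces. Write $I_+=\{i:x_i>0\}$ and $I_-=\{i:x_i<0\}$. On $I_+$, reading off the coordinates $i\in I_+$ of $\A x^{m-1}=0$ and using that $\B$ has nonnegative entries (hence $(\B x^{m-1})_i$ only gets \emph{more} negative when we zero out the positive components of $x$), I would pass to the principal subtensor $\D_+$ of $\B$ on $I_+$ and the vector $y=x^+|_{I_+}>0$ to obtain $r\, y_i^{m-1} = (\B x^{m-1})_i \le (\D_+ y^{m-1})_i$ for $i\in I_+$ — wait, one must be careful here because mixed terms in $\B x^{m-1}$ involving negative $x_j$'s contribute with a sign depending on how many such factors appear. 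The cleaner route, and the one I would actually pursue, is to use the substitution $z:=x^{[\,\cdot\,]}$ trick or simply to invoke \eqref{strong m-tensor plus diagonal}: by the Remark following Theorem \ref{big theorem}, a strong $M$-tensor satisfies $(\A+\D)x^{m-1}=0\Rightarrow x=0$ for every nonnegative diagonal tensor $\D$. So it suffices to rule out \emph{nonzero} solutions of $\A x^{m-1}=0$; and by Corollary \ref{conditions giving Q}/Theorem \ref{big theorem}(e), a strong $M$-tensor is strictly semi-monotone, i.e. $\max_i x_i(\A x^{m-1})_i>0$ for all $0\neq x\ge 0$. To handle a sign-changing $x$, I would apply this to $|x|:=(|x_1|,\dots,|x_n|)\ge 0$ and compare $\A|x|^{m-1}$ with $\A x^{m-1}$ componentwise, using the $Z$-structure: off-diagonal entries of $\A$ are $\le 0$, and since $m$ is even $x_j\,x_k\cdots$ (product of $m-1$ factors, with the $i$th coordinate equation multiplied through by $x_i$, total $m$ factors) — the relevant inequality is $x_i(\A x^{m-1})_i \ge |x_i|(\A|x|^{m-1})_i$ coordinatewise, because replacing each $x_j$ by $|x_j|$ in an off-diagonal ($\le 0$) term makes the $m$-fold product $x_i x_{i_2}\cdots x_{i_m}$ no larger in absolute value while the coefficient is nonpositive. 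If $x\neq 0$ then $|x|\neq 0$, and strict semi-monotonicity gives $|x_i|(\A|x|^{m-1})_i>0$ for some $i$, whence $x_i(\A x^{m-1})_i>0$, contradicting $\A x^{m-1}=0$. Thus $x=0$.

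\textbf{Step 2: $(a)\Rightarrow(b)$, the degree computation $\deg(F,0)=1$.} With $F(x)=0\Leftrightarrow x=0$ established, $\deg(F,0)$ is well defined (it is $\deg(F,\Omega,0)$ for any bounded open $\Omega\ni 0$). I would set up the homotopy $H(x,t):=(1-t)x^{[m-1]}+tF(x)$ for $t\in[0,1]$, connecting $F$ at $t=1$ to the map $x\mapsto x^{[m-1]}$ at $t=0$. The key point is that the zero set of $H$ over $t\in[0,1]$ is bounded — in fact equals $\{0\}$: if $H(x,t)=0$ then $(1-t)x^{[m-1]} = -tF(x)$, i.e. $\big((1-t)\I + t\A\big)x^{m-1}=0$; since $(1-t)\I+t\A = \big((1-t)+tr\big)\I - t\B$ is again a $Z$-tensor, and $(1-t)+tr - t\rho(\B) = (1-t) + t(r-\rho(\B)) > 0$ for all $t\in[0,1]$, it is a strong $M$-tensor, so by Step 1 $x=0$. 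By homotopy invariance, $\deg(F,0)=\deg(x\mapsto x^{[m-1]},0)$; and since $m$ is even, $x\mapsto x^{[m-1]}$ is a coordinatewise strictly increasing odd-degree map, a homeomorphism of $\Rn$ with positive Jacobian away from the coordinate hyperplanes, so its local degree at $0$ is $1$ (one can see this directly: it is orientation-preserving, or homotope it coordinatewise to the identity via $(1-s)x_i + s x_i^{m-1}$, whose only zero is again $0$). Hence $\deg(F,0)=1$.

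\textbf{Step 3: $(b)\Rightarrow(c)$.} This is immediate from the nearness/homogeneity argument used in Theorem \ref{nonzero degree impies Q}. Since $F$ is positive homogeneous of degree $m-1$ with $m$ even, $F$ is in fact homogeneous of \emph{odd} degree $m-1$, so $F(-x)=-F(x)$ and the image of $F$ is symmetric; more to the point, given any $q\in\Rn$, consider $G(x):=F(x)-q$ and the homotopy $H(x,t):=F(x)-tq$. Its zero set is bounded: if $F(x)=tq$ with $\|x\|$ large, normalize $\bar x=\lim x^{(k)}/\|x^{(k)}\|$ to get $F(\bar x)=0$ with $\|\bar x\|=1$, contradicting $(b)$. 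So for a large ball $\Omega$, homotopy invariance gives $\deg(F-q,\Omega,0)=\deg(F,\Omega,0)=\deg(F,0)=1\neq 0$, hence $F(x)=q$ has a solution. As $q$ was arbitrary, $F$ is surjective.

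\textbf{Main obstacle.} The delicate point is Step 1 — specifically, the coordinatewise comparison $x_i(\A x^{m-1})_i \ge |x_i|\,(\A |x|^{m-1})_i$ for a $Z$-tensor of even order. One must check the signs carefully: the diagonal term contributes $r\,|x_i|^m$ on both sides (equal, using $m$ even), while each off-diagonal term $a_{i\,i_2\cdots i_m}\,x_i x_{i_2}\cdots x_{i_m}$ has $a_{i\,i_2\cdots i_m}\le 0$ and $|x_i x_{i_2}\cdots x_{i_m}| = |x_i||x_{i_2}|\cdots|x_{i_m}|$, so $a_{i\,i_2\cdots i_m}\,x_i x_{i_2}\cdots x_{i_m} \ge a_{i\,i_2\cdots i_m}\,|x_i||x_{i_2}|\cdots|x_{i_m}|$, giving the claimed inequality upon summation. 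Everything else is bookkeeping with degree-theory facts already recalled in the paper.
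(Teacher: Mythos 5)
Your proposal is correct and follows essentially the same route as the paper: a linear homotopy deforming $F$ to the power map $x\mapsto x^{[m-1]}$ (whose local degree at $0$ is $1$ because $m$ is even), homotopy invariance to get $\deg(F,0)=1$, and then degree nonvanishing plus positive homogeneity to conclude surjectivity. The only deviations are minor and harmless: in Step 1 you re-prove $F(x)=0\Rightarrow x=0$ via the coordinatewise comparison $x_i(\A x^{m-1})_i\geq |x_i|(\A|x|^{m-1})_i$ and strict semi-monotonicity (the paper simply cites item (ii) of Theorem \ref{big theorem} with $\varepsilon=0$, which needs no evenness), and in Step 3 your homotopy $F(x)-tq$ with the normalization argument replaces the paper's nearness-plus-scaling argument, both of which are valid.
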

\begin{proof}
$(a)\Rightarrow (b)$: Suppose  $\A$ is a strong $M$-tensor. Then, by Item $(ii)$ of Theorem \ref{big theorem}, $F(x)=0\Rightarrow x=0$. Thus, the local degree of $F$ at the origin is defined. Let $\A=r\I-\B$, where $\B$ is nonnegative tensor with $\rho(\B)<r$. Then,
for any $t\in [0,1]$, $r\I-t\B$ is also a strong $M$-tensor. Thus,
$(r\I-t\B ) x^{m-1}=0\Rightarrow x=0$. Now the homotopy
$$H(x,t):=(r\I-t\B ) x^{m-1}$$
connects $H(x,0)=r x^{[m-1]}=:G(x)$ and $H(x,1)=F(x)$; moreover,
$H(x,t)=0\Rightarrow x=0$ for all $t\in [0,1]$. Thus,
by the homotopy invariance of degree,
$$\deg(F,0)=\deg(G,0).$$
As $m$ is even, the local degree of the
one variable function $\phi(t)=t^{m-1}$ at zero is one; it follows from Cartesian product property of  degree (see Prop. 2.1.3(h) in \cite{facchinei-pang-I})
that $\deg(G,0)=1$.
Hence,  $\deg(F,0)=1$. \\
$(b)\Rightarrow (c)$: Assume $(b)$ and let $\Omega$ be a bounded open set in $\Rn$ containing zero. Then, by the nearness property of the degree, for all $q$ close to zero, $\deg(F-q,\Omega,0)=\deg(F,\Omega,0)=\deg(F,0)=1$. This means that the equation $F(x)-q=0$ has a solution for all such $q$.
Since $F$ is positive homogeneous, by scaling, we see that $F(x)-q=0$ will have a solution for all $q\in \Rn$. This proves the surjectivity of $F$. 
\end{proof}

\gap

The following example shows that the map $F$ in the above theorem need not be injective and that the inclusion $F^{-1}(\Rn_+)\subseteq \Rn_+$ may not hold.

\gap

\begin{example} \label{example} Let $\A=[a_{i_1i_2i_3i_4}]$ be of order $4$ and dimension $2$ with
$$ a_{1111}=a_{2222}=1, ~a_{1112}=-2, ~a_{1122}=-\alpha, ~\textrm{other~entries ~}0,$$
where $\alpha\in \{0,4\}$.
Obviously, $\A$ is a $Z$-tensor. For this tensor,
$$(\A x^3)_1=F_1(x)= x_1^3-2x_1^2x_2-\alpha\,x_1x_2^2\quad\mbox{and}\quad (\A x^3)_2=F_2(x)=x_2^3,$$
where  $x=(x_1,x_2)^{\top}\in R^2$.
Now, for any nonzero $x=(x_1,x_2)^{\top}$, we have:
\begin{enumerate}
  \item if $x_2\neq 0$, then $x_2\left(\A x^3\right)_2=x_2^4>0$;
  \item if $x_2=0$ (in which case $x_1\neq 0$), then $x_1\left(\A x^3\right)_1=x_1^4>0$.
\end{enumerate}
\end{example}
Thus, condition $(e)$ of Theorem \ref{big theorem} holds;  hence, $\A$ is a strong $M$-tensor.
\\
When $\alpha=4$, $F$ equals $(1,1)^{\top}$  at $(-1,1)^{\top}$ and at $(t,1)^{\top}$ for some $t>0$.
This means that $F$ is not injective and the inclusion
$F^{-1}(R^2_+)\subseteq R^2_+$ does not hold.

\gap

\noindent{\bf The $P$-property}\\
It is well known that a $Z$-matrix has the $P$-property if and only if it is a strong $M$-matrix \cite{berman-plemmons}. Does such a
statement hold for $Z$-tensors? Recall that for a square real matrix $A$, the $P$-property can be described in any one of the following three equivalent ways \cite{cottle-pang-stone}:

\begin{enumerate}
\item [(i)] Every principal minor of $A$ is positive.
\item [(ii)] For each nonzero $x\in \Rn$, $\max_{i}\,x_i(Ax)_i>0$.
\item [(iii)] For every $q\in \Rn$, LCP$(A,q)$ has a unique solution.
\end{enumerate}

\gap

We will show below that for strong $M$-tensors, appropriate analogs of $(i)$ and $(ii)$ hold, but  $(iii)$ may fail.

Now for the positive principal minor property.
While the determinant of a tensor is defined (see \cite{su-huang-ling-qi}), it is not clear how to relate the (positive) determinants with the $Z$-property. So,  we describe the positive principal minor property in a different way. Suppose  $A$ is an invertible matrix. Then, $f(x):=Ax$ is linear and
$f(x)=0\Rightarrow x=0$. Thus, $\deg(f,0)$ is defined and moreover
$\deg(f,0)=sgn \,\det(A)=1$ if and only if the determinant of $A$ is positive. A similar statement holds for principal submatrices of $A$ as well. Thus, we may interpret
the positive principal minor property of $A$ by saying that $f_{\alpha}(y)=0\Rightarrow y=0$ and
$\deg(f_{\alpha},0)=1$ for each $f_{\alpha}$ that corresponds to a principal submatrix of $A$. We now state a generalization of this to even order $Z$-tensors.
The one dimensional example $\A=[1]$ with $m=3$, $n=1$ and $F(x)=x^2$ shows that the result fails for odd order tensors.
\gap

\begin{theorem}
Suppose $\A$ is a $Z$-tensor of even order. Then 
for every  principal subtensor  $\widetilde{\A}$ of $\A$, the corresponding function
$\widetilde{F}(x):=\widetilde{\A} x^{m-1}$ satisfies the conditions
$$\widetilde{F}(x)=0\Rightarrow x=0\quad\mbox{and}\quad \deg(\widetilde{F},0)=1.$$
\end{theorem}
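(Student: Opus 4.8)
The plan is to get the statement as a one-line consequence of Theorem~\ref{surjectivity theorem} and Corollary~\ref{rho of a subtensor}, exploiting the fact that the strong $M$-tensor property is inherited by principal subtensors. (Here $\A$ is a strong $M$-tensor of even order; recall from Theorem~\ref{big theorem} that for a $Z$-tensor this is the same as having the $Q$-property.)

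First I would record the elementary observation that a principal subtensor $\widetilde{\A}$ of a $Z$-tensor $\A$ is again a $Z$-tensor of the same order $m$: restricting every index to a subset $I\subseteq\{1,\dots,n\}$ leaves diagonal entries on the diagonal and off-diagonal entries off it, so the sign pattern required of a $Z$-tensor persists. Writing $\A=r\I-\B$ with $\B\geq 0$ and $r>\rho(\B)$, the subtensor over $I$ splits as $\widetilde{\A}=r\widetilde{\I}-\widetilde{\B}$, where $\widetilde{\I}$ is the identity tensor of dimension $|I|$ and $\widetilde{\B}$ is the principal subtensor of $\B$ over $I$. By Corollary~\ref{rho of a subtensor}, $\rho(\widetilde{\B})\leq\rho(\B)<r$, so $\widetilde{\A}$ is itself a strong $M$-tensor, and its order $m$ is still even. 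Applying the implication $(a)\Rightarrow(b)$ of Theorem~\ref{surjectivity theorem} to $\widetilde{\A}$ in place of $\A$ then yields exactly $\widetilde{F}(x)=0\Rightarrow x=0$ and $\deg(\widetilde{F},0)=1$.

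Should a self-contained argument be wanted, one can instead rerun the proof of Theorem~\ref{surjectivity theorem}$(a)\Rightarrow(b)$ on $\widetilde{\A}$: the homotopy $H(y,t)=(r\widetilde{\I}-t\widetilde{\B})\,y^{m-1}$ has only the trivial zero for every $t\in[0,1]$, since each $r\widetilde{\I}-t\widetilde{\B}$ is a strong $M$-tensor, so $\deg(\widetilde{F},0)=\deg(r\,y^{[m-1]},0)$, and the latter is $1$ by the Cartesian product property of degree because $m$ is even. I do not expect a real obstacle --- the proof is pure assembly --- but two things need care: the local degree $\deg(\widetilde{F},0)$ is only defined once $\widetilde{F}$ is known to vanish solely at the origin (that implication is Item~(ii) of Theorem~\ref{big theorem} applied to $\widetilde{\A}$, and must come first), and evenness of $m$ is genuinely used, since for odd $m$ the example $\A=[1]$, $F(x)=x^2$ already has $\deg(F,0)=0$.
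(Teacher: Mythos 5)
Your proof is correct and follows essentially the same route as the paper's own: the paper also writes $\A=r\I-\B$ with $r>\rho(\B)$, uses Corollary~\ref{rho of a subtensor} to conclude that each principal subtensor $\widetilde{\A}=r\widetilde{\I}-\widetilde{\B}$ is again a strong $M$-tensor of even order, and then applies the argument of Theorem~\ref{surjectivity theorem} ($(a)\Rightarrow(b)$, i.e.\ the homotopy to $r\,y^{[m-1]}$) to $\widetilde{F}$. Your reading of the hypothesis as ``strong $M$-tensor of even order'' matches the paper's proof, and your remarks on first establishing $\widetilde{F}(x)=0\Rightarrow x=0$ and on the role of evenness are exactly the points the paper relies on.
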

\begin{proof}   Let $\A=r\I-\B$, where $\B$ is a nonnegative tensor and $r>\rho(B).$
The case of $\A$ and $F(x)=\A x^{m-1}$ has been dealt with in the previous theorem. We assume that $\widetilde{\A}$ is a subtensor of $\A$, not equal to $\A$. Then there exists a proper subset $I$ of $\{1,2,\ldots,n\}$, which we assume without loss of generality, $I=\{1,2,\ldots, l\}$  such that
$$\widetilde{\A}=[a_{j_1j_2\cdots j_m}],$$
where $j_k\in I$ for all $k=1,2,\ldots,m.$
Let $\D$ be the subtensor of $\B$ corresponding to
this $I$ so that $\C:=\widetilde{\A}=r\I-\D$. As  $\D$ is a principal subtensor of $\B$, we must have $\rho(\D)\leq \rho(\B)<r$. Thus, $\C$ is a strong $M$-tensor. By what has been proved earlier, for $G(x)=\C x^{m-1}$, $x\in R^l$,
$G(x)=0\Rightarrow x=0$ and $\deg(G,0)=1.$ 
This completes the proof.
\end{proof}

\gap

We now consider the $P$-matrix condition $(ii)$: for each nonzero $x\in \Rn$, $\max_{i}\,x_i(Ax)_i>0$.
Recently, Song and Qi \cite{Song-Qi-2015} extended this to tensors: A tensor $\A$ is said to be a $P$-tensor if
for any nonzero $x\in \Rn$, $\max_{i}\,x_i(\A x^{m-1})_i>0$. This was further extended in  \cite{DLQ2015}:
A tensor $\A$ is said to be an (extended) $P$-tensor if for any  nonzero $x$,  $\max_{i}\,x^{m-1}_i(\A x^{m-1})_i>0$.

\gap

\begin{theorem}\label{M-P} Suppose $\A$ is a $Z$-tensor.   Then the following are equivalent:
\begin{enumerate}
\item [(a)] $\A$ is a strong $M$-tensor.
\item [(b)] For any nonzero $x$,  $\max_{i}\,x^{m-1}_i(\A x^{m-1})_i>0$.
\end{enumerate}
If $m$ is even, these are further equivalent to
\begin{enumerate}
\item [(c)] For any nonzero $x$, $\max_{i}\,x_i(\A x^{m-1})_i>0$.
\end{enumerate}
\end{theorem}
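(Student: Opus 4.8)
The plan is to prove the chain $(a)\Rightarrow(c)\Rightarrow(b)\Rightarrow(a)$ when $m$ is even, and $(a)\Leftrightarrow(b)$ in general, so that $(c)$ slots in only under the evenness hypothesis. The implication $(a)\Rightarrow(b)$ is essentially already contained in Theorem \ref{big theorem}: if $\A=r\I-\B$ with $r>\rho(\B)$ and there were a nonzero $x$ with $x_i^{m-1}(\A x^{m-1})_i\le 0$ for all $i$, I would restrict attention to the index set $I=\{i:x_i\neq 0\}$, pass to the principal subtensor $\D$ of $\B$ on $I$, and observe that on $I$ the inequality $x_i^{m-1}(\A x^{m-1})_i\le 0$ forces $(\A x^{m-1})_i \le 0$ (dividing by $x_i^{m-1}$, which is nonzero, keeping in mind its sign), hence $(r\I-\D)y^{m-1}\le 0$ componentwise where $y$ collects the $x_i$, $i\in I$. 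One subtlety relative to condition $(e)$ of Theorem \ref{big theorem} is the sign of $x_i^{m-1}$ for odd $m$; this is precisely where I must be careful, and I expect to use the Collatz--Wielandt bound of Proposition \ref{yang-yang proposition} applied to $|y|$ (the vector of absolute values) rather than $y$ itself, concluding $r\le\rho(\D)\le\rho(\B)$, a contradiction. So the genuinely delicate point is handling signs in the odd-order case; the even-order case is cleaner because $x_i^{m-1}$ has the same sign as $x_i$.

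For $(b)\Rightarrow(a)$, I would argue contrapositively: if $\A=r\I-\B$ is \emph{not} a strong $M$-tensor, then $r\le\rho(\B)$, and by Proposition \ref{positive stable proposition}, $\mu(\A)=r-\rho(\B)\le 0$ is a real eigenvalue of $\A$ with a real eigenvector $v$. Then $\A v^{m-1}=\mu(\A)\,v^{[m-1]}$, so $v_i^{m-1}(\A v^{m-1})_i=\mu(\A)\,v_i^{2(m-1)}\le 0$ for every $i$, and since $\mu(\A)\le 0$ this shows $\max_i v_i^{m-1}(\A v^{m-1})_i\le 0$, contradicting $(b)$. (Note $v_i^{2(m-1)}\ge 0$ automatically, which is why the extended-$P$ formulation with the $x_i^{m-1}$ factor works uniformly in the parity of $m$.) This closes the equivalence $(a)\Leftrightarrow(b)$ with no parity assumption.

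It remains to incorporate $(c)$ when $m$ is even. Here $(a)\Rightarrow(c)$ is exactly the implication $(d)\Rightarrow(e)$ of Theorem \ref{big theorem}, which I may simply cite. For $(c)\Rightarrow(b)$ under $m$ even: when $m$ is even, $m-1$ is odd, so $x_i$ and $x_i^{m-1}$ have the same sign; writing $x_i^{m-1}=\operatorname{sgn}(x_i)\,|x_i|^{m-1}$ and $x_i=\operatorname{sgn}(x_i)\,|x_i|$, the two quantities $x_i(\A x^{m-1})_i$ and $x_i^{m-1}(\A x^{m-1})_i$ have the same sign for each $i$, so one is positive for some $i$ exactly when the other is. Hence $(b)$ and $(c)$ are literally equivalent coordinatewise in the even-order setting, with no further work.

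The main obstacle, as flagged, is the sign bookkeeping in $(a)\Rightarrow(b)$ for odd $m$; I would isolate that as a short lemma: for a $Z$-tensor $\A=r\I-\B$, if $0\neq x$ satisfies $x_i^{m-1}(\A x^{m-1})_i\le 0$ for all $i$, then on $I=\{i:x_i\neq 0\}$ one gets $(\D\,|y|^{m-1})_i\ge r\,|y_i|^{m-1}$ for all $i\in I$, after checking that the off-diagonal non-positivity of $\B$ together with the sign pattern of $x$ yields this absolute-value inequality — this uses that $\B$ nonnegative implies the relevant cross terms, once grouped by sign of $x$, can only help the inequality go the right way. With that lemma in hand, Proposition \ref{yang-yang proposition} applied to $d=|y|>0$ finishes it.
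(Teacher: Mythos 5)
Your proposal is correct, but it reaches the two main implications by a genuinely different route than the paper. For $(a)\Rightarrow(b)$, the paper argues by induction on the dimension (to force every component of a putative counterexample $x$ to be nonzero), then forms the nonnegative diagonal tensor with entries $-(\A x^{m-1})_i/x_i^{m-1}$ and invokes the remark (\ref{strong m-tensor plus diagonal}) following Theorem \ref{big theorem} to rule out $(\A+\D)x^{m-1}=0$ with $x\neq 0$. You instead restrict to the support $I$ and run an absolute-value Collatz--Wielandt argument; your flagged lemma does hold, and the ``checking'' is just the triangle inequality: $x_i^{m-1}(\A x^{m-1})_i\le 0$ gives $r|x_i|^{2(m-1)}\le x_i^{m-1}(\B x^{m-1})_i\le |x_i|^{m-1}(\B|x|^{m-1})_i$, hence $(\D|y|^{m-1})_i\ge r|y_i|^{m-1}$ on $I$, and Proposition \ref{yang-yang proposition} applied to $d=|y|>0$ yields $r\le\rho(\D)\le\rho(\B)$, contradicting $r>\rho(\B)$ (if $\D=0$ one gets $r\le 0$ directly). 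This buys a shorter, self-contained argument --- no induction and no appeal to the diagonal-perturbation remark --- and it is the natural sign-unrestricted extension of the paper's own $(d)\Rightarrow(e)$ step in Theorem \ref{big theorem}; the paper's route, in exchange, reuses already-established machinery rather than redoing a Perron--Frobenius estimate. For $(b)\Rightarrow(a)$, the paper simply restricts $(b)$ to $0\neq x\ge 0$ and lands on condition $(e)$ of Theorem \ref{big theorem}, while your contrapositive via Proposition \ref{positive stable proposition} (a real eigenvector $v$ for $\mu(\A)\le 0$ gives $v_i^{m-1}(\A v^{m-1})_i=\mu(\A)v_i^{2(m-1)}\le 0$ for all $i$) is equally valid but goes through the spectral characterization instead; the parity argument for $(b)\Leftrightarrow(c)$ is the same as the paper's. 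One small inaccuracy: $(a)\Rightarrow(c)$ is not ``exactly'' the implication $(d)\Rightarrow(e)$ of Theorem \ref{big theorem}, since condition $(e)$ there quantifies only over nonnegative $x$, whereas $(c)$ here runs over all nonzero $x$; this is harmless in your scheme because $(a)\Rightarrow(b)$ together with your coordinatewise sign equivalence $(b)\Leftrightarrow(c)$ for even $m$ already delivers it, but the citation as stated should be dropped or corrected.
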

\begin{proof} $(a)\Rightarrow (b)$: This implication  comes from Proposition 4.1 in \cite{DLQ2015}, whose proof is
based on $H$-tensors and diagonal dominance ideas. Here, for completeness, we provide a (slightly different) proof.
We prove the implication by induction on $n$. The result is clearly true for $n=1$.
Suppose $(a)$ holds and $(b)$ fails for some nonzero $x$:  $x^{m-1}_i(\A x^{m-1})_i\leq 0$ for all $i$. Such a condition cannot hold for any proper principal subtensor of $\A$ by our induction hypothesis. Thus, no component of $x$ can be zero.
Then, by putting $\alpha_i:=\frac{(\A x^{m-1})_i}{x^{m-1}_i}$, we see that each $\alpha_i$ is nonpositive. Let $\D$ be a nonnegative diagonal tensor with diagonal components $-\alpha_i$, so that $(\A+\D)x^{m-1}=0$.
As $\A$ is a strong $M$-tensor, (\ref{strong m-tensor plus diagonal}) shows that this cannot happen. Thus, $(a)\Rightarrow (b)$.\\
$(b)\Rightarrow (a)$: If condition $(b)$ holds for all nonzero $x$, it certainly holds for all nonzero nonnegative $x$. Consider such an $x$. Then there exists $x_i>0$ with $x^{m-1}_i(\A x^{m-1})_i>0$ or equivalently, $x_i(\A x^{m-1})_i>0$. This implies condition $(e)$ in Theorem \ref{big theorem}. Thus, $\A$ is a strong $M$-tensor.
\\
Now suppose that $m$ is even. Then the signs of $x_i$ and $x_i^{m-1}$ are the same. Consequently, $(b)$ and $(c)$ are equivalent.
\end{proof}

\gap

\noindent{\bf Remark. }  When $m$ is odd, $(a)$ may not imply $(c)$:
Take $\A=[1]$ with $m=3$, $n=1$ and $F(x)=x^2$.

\gap

\noindent{\bf Global uniqueness}\\
As noted previously, for a matrix $A$, the linear complementarity problem LCP$(A,q)$ has a unique solution for all $q\in \Rn$ if and only if $A$ is a $P$-matrix. In particular, this global uniqueness property holds for a strong $M$-matrix.
To see what happens for tensors, consider the strong $M$-tensor $\A$ of Example \ref{example} with $\alpha=0.$ By Theorem \ref{M-P}, $\A$ is actually an (extended) $P$-tensor.
For $q=(0,-1)^{\top}$, we have two solutions to  TCP$(\A,q)$, namely, $(0,1)^{\top}$ and $(2,1)^{\top}$. Thus, {\it uniqueness of TCP solution may not prevail even for strong $M$-tensors (or for extended $P$-tensors).}
This raises the question: {\it which strong $M$-tensors admit unique solutions in all related tensor complementarity problems}? In the complementarity literature,
a function $f:\Rn\rightarrow \Rn$ is said to have the {\it Globally Uniquely Solvable} property (GUS-property for short) if for all $q\in \Rn$,
the nonlinear complementarity problem NCP$(f,q)$ has a unique solution.
Two well-known conditions implying the GUS-property are: The strong monotonicity condition (see Section 2.3 in \cite{facchinei-pang-I}) and the `positively bounded Jacobians' condition of Megiddo  and Kojima (see Lemma 1, \cite{che-qi-wei}).
The GUS-property in the context of tensor complementarity problems has been addressed recently in
 \cite{che-qi-wei}. In the result below, we offer an easily checkable sufficient condition for a strong $M$-tensor to have the GUS-property.

\gap

\begin{theorem}\label{GUS} Suppose $\A=[a_{i_1\cdots i_m}]$ is a strong $M$-tensor of order $m\,(\geq 3)$ and dimension $n$  such that for each index $i$,
$$a_{i\,i_2\cdots i_m}=0\quad\mbox{ whenever} \,\, i_j\neq i_k\quad\mbox{for some}\,\,j\neq k.$$
Then, for any $q\in \Rn$, TCP($\A,q$) has a unique solution.
\end{theorem}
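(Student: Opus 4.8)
The plan is to exploit the very special structure that the hypothesis imposes on $F$. For each $i$, the polynomial $F_i(x)=\sum_{i_2,\dots,i_m=1}^n a_{i\,i_2\cdots i_m}x_{i_2}\cdots x_{i_m}$ reduces to $\sum_{j=1}^n a_{i\,j\,j\cdots j}\,x_j^{m-1}$, since every monomial whose indices $i_2,\dots,i_m$ are not all equal has zero coefficient. Hence, setting $C:=[c_{ij}]$ with $c_{ij}:=a_{i\,j\,j\cdots j}$, we get $F(x)=C\,x^{[m-1]}$ for all $x\in\Rn$, where $x^{[m-1]}$ is the vector with $i$th component $x_i^{m-1}$. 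First I would record the relevant structure: $C$ is a $Z$-matrix, because for $i\neq j$ the number $c_{ij}$ is an off-diagonal entry of the $Z$-tensor $\A$ and is therefore nonpositive. Writing $\A=r\I-\B$ with $\B\geq 0$ and $r>\rho(\B)$, the tensor $\B$ has the same structured form, and a direct computation gives $\B x^{m-1}=D\,x^{[m-1]}$ with $D:=rI-C$, a \emph{nonnegative} matrix.

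Next I would show $\rho(\B)=\rho(D)$. By the Perron--Frobenius theorem for tensors (Proposition~1), $\rho(\B)$ is an eigenvalue of $\B$ with a nonnegative eigenvector $u\neq 0$; then $u^{[m-1]}\geq 0$ is nonzero and $D\,u^{[m-1]}=\rho(\B)\,u^{[m-1]}$, so $\rho(\B)\leq\rho(D)$. Conversely, by the classical Perron--Frobenius theorem for the nonnegative matrix $D$, there is a nonnegative eigenvector $v\neq 0$ with $Dv=\rho(D)v$; putting $w:=v^{[1/(m-1)]}\geq 0$ (componentwise $(m-1)$st root) we get $w\neq 0$ and $\B w^{m-1}=D\,w^{[m-1]}=Dv=\rho(D)v=\rho(D)\,w^{[m-1]}$, whence $\rho(D)\leq\rho(\B)$. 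Therefore $r>\rho(\B)=\rho(D)$, i.e., $C$ is a nonsingular $M$-matrix and in particular a $P$-matrix.

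Then I would transfer the complementarity problem through the substitution $z:=x^{[m-1]}$. Since $m\geq 3$, the map $t\mapsto t^{m-1}$ is an increasing bijection of $[0,\infty)$ onto itself, so $x\mapsto x^{[m-1]}$ is a bijection of $\rnplus$ onto $\rnplus$. For $x\geq 0$ one has $F(x)+q=Cz+q$; moreover $x\geq 0\iff z\geq 0$, and $x_i=0\iff z_i=0$ for each $i$. Consequently the system $x\geq 0,\ F(x)+q\geq 0,\ x_i(F(x)+q)_i=0\ (\forall i)$ holds if and only if $z\geq 0,\ Cz+q\geq 0,\ z_i(Cz+q)_i=0\ (\forall i)$; that is, $x$ solves TCP$(\A,q)$ if and only if $z=x^{[m-1]}$ solves the linear complementarity problem LCP$(C,q)$, and this correspondence is a bijection between the two solution sets. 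Since $C$ is a $P$-matrix, LCP$(C,q)$ has a unique solution for every $q\in\Rn$ (the classical $P$-matrix characterization recalled before this theorem), and hence so does TCP$(\A,q)$.

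I expect the main obstacle to be conceptual rather than computational: recognizing that the ``diagonal in the last $m-1$ coordinates'' hypothesis collapses $F$ into the matrix map $x\mapsto C\,x^{[m-1]}$, and then verifying that the strong $M$-tensor hypothesis on $\A$ is exactly the strong $M$-matrix hypothesis on $C$, via the identity $\rho(\B)=\rho(D)$. After that, the reduction to classical $P$-matrix LCP theory is routine; the only point requiring a little care is that the nonlinear change of variables $z=x^{[m-1]}$ is a genuine bijection of the nonnegative orthant and respects the sign and support (hence complementarity) conditions, which holds precisely because $m-1\geq 1$.
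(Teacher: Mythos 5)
Your proof is correct, and it takes a genuinely different route from the paper. You exploit the structural hypothesis at the outset to collapse $F$ into the matrix map $F(x)=C\,x^{[m-1]}$ with $c_{ij}=a_{i\,j\cdots j}$, show that $C$ is a nonsingular $M$-matrix (your spectral argument $\rho(\B)=\rho(D)$ via the two Perron--Frobenius theorems is sound; in fact only $\rho(D)\le\rho(\B)<r$ is needed, and one could also get it even faster from the $S$-property, since $\A d^{m-1}>0$ gives $C\,d^{[m-1]}>0$ with $d^{[m-1]}>0$), and then observe that $z=x^{[m-1]}$ is a support-preserving bijection of $\rnplus$ carrying TCP$(\A,q)$ onto LCP$(C,q)$, so the classical $P$-matrix uniqueness theorem finishes both existence and uniqueness in one stroke. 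The paper instead invokes results of Ding, Qi and Wei to produce a positive diagonal matrix $D$ making $\bar\A=\A D^{m-1}$ strictly diagonally dominant, proves uniqueness for TCP$(\bar\A,q)$ directly by a contradiction argument at the index $j$ maximizing $|\hat y_j^{m-1}-\tilde y_j^{m-1}|$ (using the $Z$-sign pattern and strict dominance), and then transfers back through the scaling; existence there comes from the strong $M$-tensor theory of Section 4. What your approach buys is transparency and economy: under this hypothesis the TCP literally is an LCP in disguise, so uniqueness is an immediate consequence of matrix LCP theory, with no need for the diagonal-scaling machinery. What the paper's approach buys is a self-contained argument within tensor complementarity (no reduction to LCP theory) and a scaling-plus-dominance technique that is more likely to survive weakenings of the structural hypothesis, under which the clean reduction to $C\,x^{[m-1]}$ is no longer available.
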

\begin{proof} As $\A$ has the  $S$-tensor property (see Theorem \ref{big theorem}), it  follows from \cite[Theorem 3]{ding-qi-wei} and \cite[Proposition 5]{ding-qi-wei} that there exists a positive diagonal matrix $D=Diag(d_i)\in R^{n\times n}$ such that the tensor $\bar{\A}=\A D^{m-1}:=[\bar{a}_{i_1\cdots i_m}]$, defined by
\begin{equation}\label{bar-A}\bar{a}_{i_1\cdots i_m}=a_{i_1\cdots i_m}d_{i_1}\cdots d_{i_m},~~\forall \,i_1,\cdots,i_m\in \{1,\cdots,n\}\end{equation}
is  strictly diagonally dominant; in fact,
\begin{equation}\label{strict} \bar{a}_{i\cdots i}>\sum\limits_{i_2,\cdots,i_m}|\bar{a}_{ii_2\cdots i_m}| - \bar{a}_{i\cdots i} =-\sum\limits_{k\neq i}\bar{a}_{ik\cdots k},~~\forall\, i\in\{1,\cdots,n\},
\end{equation}
Now we claim that for any given $q\in\Rn$,  TCP($\bar{\A},q$) has a unique solution.
As $\bar{\A}$ is a $Z$-tensor with
$\bar{\A}e^{m-1}>0$, where $e$ is the vector of ones in  $\Rn$, it follows that $\bar{\A}$ is a strong $M$-tensor; hence,
 TCP($\bar{\A},q$) has a solution. To prove uniqueness, assume that  that there exist distinct solutions $\hat{y}$ and $\tilde{y}$ of TCP($\bar{\A},q$). That is, for any $i\in\{1,\cdots,n\}$,
\begin{equation}
\left\{
  \begin{array}{ll}
    \hat{y}_i\geq 0, ~\left(\bar{\A}\hat{y}^{m-1}+q\right)_i\geq 0,~\hat{y}_i\left(\bar{\A}\hat{y}^{m-1}+q\right)_i=0; & \hbox{} \\
    \tilde{y}_i\geq 0, ~\left(\bar{\A}\tilde{y}^{m-1}+q\right)_i\geq 0,~\tilde{y}_i\left(\bar{\A}\tilde{y}^{m-1}+q\right)_i=0. & \hbox{}
  \end{array}
\right.
\end{equation} As $\hat{y}\neq \tilde{y}$, $\max\limits_{i}\{|\hat{y}_i^{m-1}-\tilde{y}_i^{m-1}|\}>0$.
Let $j:=\arg\max\limits_{i}\{|\hat{y}_i^{m-1}-\tilde{y}_i^{m-1}|\}$, and without loss of generality,
$\hat{y}_j-\tilde{y}_j>0$.
By direct calculation, we have
\begin{eqnarray}
  &&(\hat{y}_j-\tilde{y}_j)\left(\bar{\A}\hat{y}^{m-1}-\bar{\A}\tilde{y}^{m-1}\right)_j  \nonumber \\
  &=& (\hat{y}_j-\tilde{y}_j)\left(\bar{\A}\hat{y}^{m-1}+q-\bar{\A}\tilde{y}^{m-1}-q\right)_j \nonumber\\
  &=& - \hat{y}_j \left(\bar{\A}\tilde{y}^{m-1}+q\right)_j- \tilde{y}_j \left(\bar{\A}\hat{y}^{m-1}+q\right)_j\nonumber\\
  &\leq & 0.\nonumber
\end{eqnarray}
On the other hand, by the imposed conditions on the entries of $\A$,
\begin{eqnarray}
  &&(\hat{y}_j-\tilde{y}_j)\left(\bar{\A}\hat{y}^{m-1}-\bar{\A}\tilde{y}^{m-1}\right)_j  \nonumber \\
  &=& (\hat{y}_j-\tilde{y}_j)\left [\bar{a}_{j\cdots j}\left(\hat{y}_j^{m-1}-\tilde{y}_j^{m-1}\right)+ \sum\limits_{k\neq j}\bar{a}_{jk\cdots k}\left(\hat{y}_k^{m-1}-\tilde{y}_k^{m-1}\right)\right ]\nonumber\\
  &\geq &(\hat{y}_j-\tilde{y}_j)\left(\hat{y}_j^{m-1}-\tilde{y}_j^{m-1}\right) \left(\bar{a}_{j\cdots j}+\sum\limits_{k\neq j}\bar{a}_{jk\cdots k}\right)\nonumber\\
  &> & 0,\nonumber
\end{eqnarray} where the first inequality follows from the definition of $j$ and the fact that $\bar{\A}$ is a $Z$-tensor,
and the last inequality follows from (\ref{strict}). This is  a contradiction. Thus, TCP($\bar{\A},q$) has a unique solution, say $y^*$. We can easily  verify that $y^*$ is also the unique solution to the following problem:
$$ Dy\geq 0, \bar{\A}y^{m-1}+q \geq 0,\,\, \langle Dy, \bar{\A}y^{m-1}+q\rangle=0.$$
Invoking the definition of $\bar{\A}$, it follows readily that $Dy^*$ is the unique solution to TCP($\A,q$). This completes the proof.
\end{proof}

\gap

\noindent{\bf Concluding Remarks.} 
This paper is a revised version of an earlier submission. Here, certain results (e.g., Theorems 5.1 and 5.2) are modified, 
proofs  (e.g., of Theorem 3.1) are expanded, and references are updated. In closing, we cite
 two recent works related to tensor complementarity problems. In \cite{huang-qi},  
an $n$-person noncooperative game is formulated as a tensor complementarity problem. In \cite{gowda-pcp}, polynomial complementarity problems are studied, specifically extending Theorem 3.1 to polynomial functions. 
\end{document}